\documentclass[10pt,letterpaper]{amsart}
\usepackage{multicol,amssymb,amsmath,amsbsy,amsthm,graphicx,esvect,comment,caption,epsfig,subfigure,epstopdf}

\usepackage{cleveref}
\usepackage[latin1]{inputenc}

\theoremstyle{plain}
\newtheorem{theorem}{Theorem}[section]
\newtheorem{lemma}[theorem]{Lemma}

\newtheorem{proposition}[theorem]{Proposition}
\newtheorem{conjecture}[theorem]{Conjecture}

\theoremstyle{remark}
\newtheorem{remark}[theorem]{Remark}

\theoremstyle{definition}
\newtheorem{definition}[theorem]{Definition}

\setlength{\oddsidemargin}{0.3 cm} \setlength{\evensidemargin}{0.3
cm} \setlength{\textwidth}{15 cm} \setlength{\textheight}{23 cm}
\setlength{\headsep}{0.5 cm} \setlength{\topmargin}{0 cm}

\newcommand{\card}{\text{\rm{card}}}

\begin{document}

\title[Asymptotics of the energy of sections of greedy energy sequences]
{Asymptotics of the energy of sections of greedy energy sequences on the unit circle, and some conjectures for general sequences}

\author{Abey L\'{o}pez-Garc\'{i}a}
\address{University of South Alabama, Department of Mathematics and Statistics, ILB 325, 411 University Blvd North, Mobile AL, 36688.}
\email{lopezgarcia@southalabama.edu}

\author{Douglas A. Wagner}
\address{University of South Alabama, Department of Mathematics and Statistics, ILB 325, 411 University Blvd North, Mobile AL, 36688.}
\email{daw802@jagmail.southalabama.edu}
\thanks{The results of this paper form a part of the second author's M. Sc. dissertation at the University of South Alabama. The first author was partially supported by the grant MTM2012-36732-C03-01 of the Spanish Ministry of Economy and Competitiveness.}

\subjclass[2010]{31C20, 52A40 (primary), and 31C15 (secondary)}

\keywords{Greedy energy sequence; Leja sequence; Riesz kernel; equilibrium measure; optimal energy configuration.}

\begin{abstract}
In this paper we investigate the asymptotic behavior of the Riesz $s$-energy of the first $N$ points of a greedy $s$-energy sequence on the unit circle, for all values of $s$ in the range $0\leq s<\infty$ (identifying as usual the case $s=0$ with the logarithmic energy). In the context of the unit circle, greedy $s$-energy sequences coincide with the classical Leja sequences constructed using the logarithmic potential. We obtain first-order and second-order asymptotic results. The key idea is to express the Riesz $s$-energy of the first $N$ points of a greedy $s$-energy sequence in terms of the binary representation of $N$. Motivated by our results, we pose some conjectures for general sequences on the unit circle.
\end{abstract}

\dedicatory{Dedicated to Ed Saff on the occasion of his $70$th birthday}

\maketitle

\section{Introduction and statement of main results}\label{section:intro}

This work can be regarded as a continuation of some of the investigations initiated by the first author and Saff in \cite{LopSaff}, where greedy energy sequences with respect to general kernels were introduced and many properties of these sequences were obtained. Greedy energy sequences are defined in \cite{LopSaff} as follows. Let $X$ be a locally compact space containing infinitely many points, and let $k: X\times X\longrightarrow \mathbb{R}\cup\{+\infty\}$ be a symmetric and lower semicontinuous function. The function $k$ is referred to as the \emph{kernel}. Given a compact set $A\subset X$, a sequence $(a_{n})_{n=0}^{\infty}\subset A$ is called a \emph{greedy $k$-energy sequence} on $A$ if it is generated in the following way:
\begin{itemize}
\item[$\bullet$] The first point $a_{0}$ is selected arbitrarily on $A$.
\item[$\bullet$] Assuming that $a_{0},\ldots,a_{n}$ have been selected, $a_{n+1}$ is chosen to satisfy
\begin{equation}\label{eq:alggreedy}
\sum_{i=0}^{n} k(a_{n+1},a_{i})=\inf_{x\in A} \sum_{i=0}^{n} k(x,a_{i})
\end{equation}
for every $n\geq 0$.
\end{itemize}

The existence of a point $a_{n+1}$ satisfying \eqref{eq:alggreedy} follows from the lower semicontinuity of $k$, but of course the choice of $a_{n+1}$ may not be unique in general. As in \cite{LopSaff}, we will use here the notation $\alpha_{N}:=(a_{0},\ldots,a_{N-1})$ to denote the first $N$ points of the sequence $(a_{n})_{n=0}^{\infty}$.

If $X=\mathbb{C}$ and we use the logarithmic kernel $k(x,y)=-\log |x-y|$ (here and below $|x-y|$ indicates the Euclidean distance between $x$ and $y$), then the above algorithm generates the classical Leja sequences on compact subsets of the complex plane. If $X=\mathbb{R}^{p}$, $p\geq 2$, and $k(x,y)=|x-y|^{-s}$ is the Riesz $s$-kernel with parameter $s>0$, then the sequences obtained are the greedy $s$-energy sequences on compact subsets of $\mathbb{R}^{p}$.

The main interest for the study of greedy energy sequences in \cite{LopSaff} was to compare the asymptotic behavior of the configurations $\alpha_{N}$ with the asymptotic behavior of optimal energy configurations. The study of the asymptotic properties of optimal energy configurations has been a leitmotif in the work of Saff, and his efforts have produced a tremendous advancement in the theory of discrete minimal energy problems. In this work we will be referring frequently to optimal energy configurations and their asymptotic properties, so we define them next.

Given a set $\omega=\{x_{1},\ldots,x_{N}\}$ of $N$ ($N\geq 2$) points in $X$, not necessarily distinct, we write $\card(\omega)=N$ and we define the \emph{discrete energy} of $\omega$ with respect to $k$ by
\begin{equation}\label{def:energy}
E(\omega):=\sum_{1\leq i\neq j\leq N}k(x_{i},x_{j})=2\sum_{1\leq i<j\leq N} k(x_{i},x_{j}).
\end{equation}
Now assume that $A\subset X$ is a compact set and $N\geq 2$ is an integer. A set $\omega_{N}\subset A$ is an \emph{optimal $N$-point configuration} on $A$ if
\[
E(\omega_{N})=\inf\{E(\omega): \omega\subset A, \card(\omega)=N\},
\]
that is, $\omega_{N}$ has the lowest possible energy among all $N$-point configurations on $A$. Note that the existence of optimal energy configurations is guaranteed by the lower semicontinuity of $k$.

Part of the results in \cite{LopSaff} were obtained in the context of the unit circle $S^{1}$ and the Riesz $s$-kernel. It was shown in \cite{LopSaff} that in terms of first-order asymptotics, there is a difference in the behavior of greedy $s$-energy sequences and optimal $N$-point configurations when $s>1$ (see \cite[Proposition 2.6]{LopSaff}) which is not present in the case $s\leq 1$. Moreover, this difference takes place in the more general context of rectifiable Jordan arcs or curves, see \cite[Theorem 2.9]{LopSaff}. It was also shown in \cite{LopSaff} that on $S^{1}$ and in the case $0<s\leq 1$, the second-order asymptotic behavior of greedy $s$-energy sequences is no longer the same as that of optimal $N$-point configurations (see \cite[Propositions 2.4 and 2.7]{LopSaff}). These differences will be explained in detail below.

In this paper we investigate more deeply the asymptotic behavior of Leja sequences and greedy $s$-energy sequences on $S^{1}$ from the energy point of view. Consequently we are able to refine some of the results in \cite{LopSaff} mentioned above. We first describe the results we obtain for Leja sequences and later for greedy $s$-energy sequences. The results we obtain have also motivated some conjectures for general sequences on $S^{1}$ that we state in Section~\ref{section:conjectures}. 

\subsection{Results for Leja sequences on the unit circle}

Recall that a Leja sequence $(a_{n})_{n=0}^{\infty}$ on an infinite compact set $K\subset\mathbb{C}$ is a sequence that is constructed by choosing an arbitrary $a_{0}\in K$, and selecting each subsequent $a_{n+1}\in K$ such that  
\begin{equation}\label{Lejaproperty}
\sum_{i=0}^{n}\log\frac{1}{|a_{n+1}-a_{i}|}=\inf_{z\in K}\sum_{i=0}^{n}\log\frac{1}{|z-a_{i}|},\qquad n\geq 0.
\end{equation}
Equivalently, for every $n\geq 0$, $a_{n+1}$ maximizes the product $\prod_{i=0}^{n}|z-a_{i}|$ on $K$. Leja sequences are named after F. Leja in recognition of his work \cite{Leja}, although they were first introduced by A. Edrei in \cite{Edrei}. These sequences have attracted some interest in recent years, especially concerning the study of their interpolation properties, see e.g. \cite{BiaCal, CalManh, Chkifa, Reichel, TaylorTotik}. Not many works have been devoted to the study of the energy and distribution of Leja sequences; some of these are \cite{Gotz, Pritsker, CD, Lop, LopSaff}.

Given a configuration $\omega=\{x_{1},\ldots,x_{N}\}$ of $N\geq 2$ distinct points in the complex plane $\mathbb{C}$, we will denote by $E_{0}(\omega)$ its logarithmic energy, that is,
\[
E_{0}(\omega):=\sum_{1\leq i\neq j\leq N}\log\frac{1}{|x_{i}-x_{j}|}=2\sum_{1\leq i<j\leq N}\log\frac{1}{|x_{i}-x_{j}|},
\]
see \eqref{def:energy}.

Let $(a_{n})_{n=0}^{\infty}$ be a Leja sequence on a compact set $K\subset\mathbb{C}$, and recall that $\alpha_{N}=(a_{0},\ldots,a_{N-1})$ denotes the $N$-tuple of the first $N$ points of this sequence. A well-known result in logarithmic potential theory that can be consulted in \cite[Theorem V.1.1]{SaffTotik} asserts that if $K$ is non-polar (i.e., $K$ supports a positive measure with finite logarithmic energy), then
\begin{equation}\label{eq:firstorderasymp}
\lim_{N\rightarrow\infty}\frac{E_{0}(\alpha_{N})}{N^{2}}=\inf_{\mu\in\mathcal{P}(K)}\iint\log\frac{1}{|z-w|}\,d\mu(w)\,d\mu(z),
\end{equation}
where $\mathcal{P}(K)$ is the set of probability measures supported on $K$. Moreover, the sequence of point configurations $\alpha_{N}$ has as limiting distribution the equilibrium measure on $K$, which is the unique probability measure $\nu$ on $K$ satisfying the extremal property
\begin{equation}\label{contlogenergy}
\iint\log\frac{1}{|z-w|}\,d\nu(w)\,d\nu(z)=\inf_{\mu\in \mathcal{P}(K)}\iint\log\frac{1}{|z-w|}\,d\mu(w)\,d\mu(z).
\end{equation}
The asymptotic result \eqref{eq:firstorderasymp} was first proved for Fekete sets on $K$ by Fekete and Szeg\H{o}, see \cite[Theorem 5.5.2]{Ran}. Fekete sets on $K$ consisting of $N\geq 2$ points are exactly optimal $N$-point configurations on $K$ relative to the logarithmic kernel; that is, they are configurations $\omega_{N}\subset K$ satisfying
\[
E_{0}(\omega_{N})=\inf\{E_{0}(\omega): \omega\subset K, \card(\omega)=N\}.
\]

Before we state our results for Leja sequences on $S^{1}$ we describe some basic properties of these sequences. Firstly, it is clear that a rotation (by multiplication with $\rho\in\mathbb{C}$, $|\rho|=1$) will neither destroy the Leja sequence property \eqref{Lejaproperty} nor change the logarithmic energy of the configurations $\alpha_{N}$. So, it suffices to consider Leja sequences starting with initial point $1$. Following the terminology used in \cite{BiaCal} and \cite{CalManh}, we will refer to the configurations $\alpha_{N}$ as \emph{$N$-Leja sections}. 

Leja sequences on $S^{1}$ can be described in detail by the following properties obtained by L. Bialas-Ciez and J.-P. Calvi in \cite[Theorem 5]{BiaCal}, see also \cite[Lemma 4.2]{LopSaff}. Let us define first the notation $(A,B)=(a_{0},\ldots,a_{N-1},b_{0},\ldots,b_{M-1})$ for an $N$-tuple $A=(a_{0},\ldots,a_{N-1})$ and an $M$-tuple $B=(b_{0},\ldots,b_{M-1})$. 
Then:
\begin{itemize}
\item[$1)$] Any $2^{n}$-Leja section is formed by the $2^{n}$th roots of unity. 

\item[$2)$] Given any $2^{n+1}$-Leja section $\alpha_{2^{n+1}}$ containing the $2^{n}$-Leja section $\alpha_{2^{n}}$ as its first $2^{n}$ points, there exists a $2^{n}$th root $\rho$ of $-1$ and a $2^{n}$-Leja section $\beta_{2^{n}}$ such that $\alpha_{2^{n+1}}=(\alpha_{2^{n}},\rho\,\beta_{2^{n}})$. 

\item[$3)$] Iterating $2)$, it is easily seen that for any $k$-Leja section $\alpha_{k}$ with $k=2^{n_{1}}+2^{n_{2}}+\cdots+2^{n_{t}}$, $n_{1}>n_{2}>\cdots >n_{t}\geq0$, there exists for each $i=1,\ldots,t$ a $2^{n_{i}}$-Leja section $\alpha^{i}_{2^{n_{i}}}$ (with initial point $1$) such that 
\[
\alpha_{k}= (\alpha^{1}_{2^{n_{1}}},\rho_{1}\,\alpha^{2}_{2^{n_{2}}},\rho_{1}\rho_{2}\,\alpha^{3}_{2^{n_{3}}},\ldots, \left(\prod_{i=1}^{t-1}\rho_{i}\right)\alpha^{t}_{2^{n_{t}}}),
\] 
for some numbers $\rho_{i}$ that are $2^{n_{i}}$th roots of $-1$. In other words, any Leja section is composed of rotated Leja sections of smaller size. 
\end{itemize}

Concerning the asymptotic behavior of $E_{0}(\alpha_{N})$ for Leja sequences on the unit circle, the asymptotic formula \eqref{eq:firstorderasymp} applied in this context gives
\[
\lim_{N\rightarrow\infty}\frac{E_{0}(\alpha_{N})}{N^2}=0,
\]
since the equilibrium measure on $S^{1}$ is the normalized arclength measure and its logarithmic energy \eqref{contlogenergy} is zero. In this paper we prove the following.

\begin{theorem}\label{theo:firstorderasymp}
If $(a_{n})_{n=0}^{\infty}$ is a Leja sequence on $S^{1}$, then for the sequence $\alpha_{N}=(a_{0},\ldots,a_{N-1})$ we have 
\begin{equation}\label{firstorderlimit}
\lim_{N\rightarrow\infty}\frac{E_{0}(\alpha_{N})}{N\log N}=-1.
\end{equation}
\end{theorem}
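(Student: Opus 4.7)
The plan is to use the structural decomposition in property 3) to obtain an exact closed-form expression for $E_0(\alpha_N)$ indexed by the binary digits of $N$, and then extract the asymptotics by elementary estimation.

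Write $N=2^{n_1}+\cdots+2^{n_t}$ with $n_1>\cdots>n_t\ge 0$, and by 3) write $\alpha_N=(B_1,\ldots,B_t)$, where $B_\ell:=\sigma_\ell\,\alpha^\ell_{2^{n_\ell}}$ with $\sigma_\ell=\prod_{k<\ell}\rho_k$ ($\sigma_1=1$). By 1), each $B_\ell$ is a rotated copy of the set of $2^{n_\ell}$-th roots of unity. I would split
\[
E_0(\alpha_N)=\sum_{\ell=1}^{t}E_0(B_\ell)+2\sum_{1\le i<j\le t}\sum_{x\in B_i,\,y\in B_j}\log\frac{1}{|x-y|}.
\]
From the classical identity $\prod_{k=1}^{m-1}(1-e^{2\pi ik/m})=m$, and since rotation preserves energy, the intra-block terms evaluate to $E_0(B_\ell)=-n_\ell\,2^{n_\ell}\log 2$.

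The main computation is the cross term. Fix $i<j$, so $n_i>n_j$. The monic polynomial vanishing on $B_i$ is $P_i(z)=z^{2^{n_i}}-\sigma_i^{2^{n_i}}$. For $y\in B_j$, since $y/\sigma_j$ is a $2^{n_j}$-th root of unity and $n_j<n_i$, one has $y^{2^{n_i}}=\sigma_j^{2^{n_i}}$. Using $\rho_i^{2^{n_i}}=-1$ together with $\rho_k^{2^{n_i}}=(-1)^{2^{n_i-n_k}}=1$ for $k>i$ (the exponent is even since $n_i>n_k$), I obtain $(\sigma_j/\sigma_i)^{2^{n_i}}=-1$, hence $|P_i(y)|=|\sigma_j^{2^{n_i}}-\sigma_i^{2^{n_i}}|=2$ for \emph{every} $y\in B_j$. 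Consequently
\[
\sum_{x\in B_i,\,y\in B_j}\log\frac{1}{|x-y|}=-\sum_{y\in B_j}\log|P_i(y)|=-2^{n_j}\log 2,
\]
which yields the closed form
\[
E_0(\alpha_N)=-\log 2\left[\,\sum_{\ell=1}^{t}n_\ell\,2^{n_\ell}+2\sum_{\ell=2}^{t}(\ell-1)\,2^{n_\ell}\right].
\]

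To finish, I would note that since $n_\ell\le n_1-(\ell-1)$ and $n_1=\lfloor\log_2 N\rfloor$, the second bracketed sum is bounded by $2^{n_1}\sum_{\ell\ge 1}\ell\,2^{-\ell}=O(N)$, while $N\log_2 N-\sum_\ell n_\ell 2^{n_\ell}=\sum_\ell(\log_2 N-n_\ell)2^{n_\ell}\le\sum_\ell \ell\cdot 2^{n_1-(\ell-1)}=O(N)$. Combining gives $E_0(\alpha_N)=-N\log N+O(N)$, and dividing by $N\log N$ produces the limit $-1$. The one genuinely nontrivial step is the clean cross-block identity $|P_i(y)|=2$, which holds independently of the freedom in choosing the roots $\rho_k$ of $-1$ and the nested Leja sections $\alpha^\ell_{2^{n_\ell}}$; once this is in hand, the remainder is arithmetic on the binary expansion of $N$.
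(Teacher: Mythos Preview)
Your proof is correct and arrives at exactly the same closed form
\[
E_0(\alpha_N)=-\log 2\Bigl[\sum_{\ell=1}^{t}n_\ell\,2^{n_\ell}+2\sum_{\ell=2}^{t}(\ell-1)\,2^{n_\ell}\Bigr]
\]
that the paper obtains, but the two derivations are organized differently. The paper reaches this formula via the pointwise identity $\prod_{i=0}^{k-1}|a_k-a_i|=2^{\tau(k)}$ (quoted from Calvi--Van~Manh), which gives $E_0(\alpha_N)=-2\log 2\sum_{k=1}^{N-1}\tau(k)$, and then evaluates $\sum_{k<N}\tau(k)$ by an induction on the binary length (Lemma~\ref{lemmasum}). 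You instead split $\alpha_N$ into the rotated dyadic blocks $B_\ell$ and compute the inter-block interaction directly through the polynomial identity $|P_i(y)|=2$; this is in effect a self-contained re-proof of the Calvi--Van~Manh fact, packaged at the block level rather than point by point.

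For the asymptotics the paper separates $\Lambda_{N,1}$ and $\Lambda_{N,2}$ and shows $\Lambda_{N,1}/(N\log N)\to -1$ by a dominated-convergence argument on the differences $n_1-n_i$. Your route is shorter: you show outright that $E_0(\alpha_N)=-N\log N+O(N)$, which already contains the qualitative content of Theorem~\ref{theo:secondorderest}. The estimate $\sum_\ell(\log_2 N-n_\ell)2^{n_\ell}\le\sum_\ell \ell\,2^{n_1-(\ell-1)}$ is indeed valid term by term once one uses $\log_2 N<n_1+1$ and the monotonicity of $m\mapsto(m+1)2^{-m}$ on nonnegative integers, so nothing is missing. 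What the paper's organization buys is the explicit link to the binary digit-sum function $\tau$, which it reuses elsewhere; what your organization buys is a self-contained argument that does not need the cited identity and a quicker passage to the limit.
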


We note that each Fekete set on $S^{1}$ with $N\geq 2$ points is a rotated copy of the $N$th roots of unity having logarithmic energy $-N\log N$. Therefore $E_{0}(\omega)\geq -N\log N$ for any $N$-point configuration $\omega$ on $S^{1}$. A refinement of \eqref{firstorderlimit} is the following second order estimate.

\begin{theorem}\label{theo:secondorderest}
Under the same assumptions as in Theorem $\ref{theo:firstorderasymp}$, for every $N$ we have
\begin{equation}\label{secondorderest}
0\leq \frac{E_{0}(\alpha_{N})+N\log(N)}{N}<\log(4/3).
\end{equation}
The upper bound in \eqref{secondorderest} is best possible since
\begin{equation}\label{ulsecondorder}
\limsup_{N\rightarrow\infty}\frac{E_{0}(\alpha_{N})+N\log N}{N}=\log(4/3).
\end{equation}
\end{theorem}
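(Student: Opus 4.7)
The plan is to first derive a closed-form expression for $E_0(\alpha_N)$ that depends only on the binary expansion $N=m_1+\cdots+m_t$ (with $m_i=2^{n_i}$, $n_1>\cdots>n_t\geq0$), and then analyze the normalized quantity $[E_0(\alpha_N)+N\log N]/N$ through a Kullback--Leibler identity.

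By property~3 from the introduction, we can write $\alpha_N=(B_1,\ldots,B_t)$, where each $B_i=\rho^{(i)}U_{m_i}$ is a rotation of the $m_i$th roots of unity, with $\rho^{(1)}=1$, $\rho^{(i)}=\prod_{k=1}^{i-1}\rho_k$ for $i\geq 2$, and each $\rho_k$ a $2^{n_k}$th root of $-1$. Splitting the energy into self-interactions and cross-interactions,
\[
E_0(\alpha_N)=\sum_{i=1}^t E_0(B_i)+2\sum_{1\leq i<j\leq t}\sum_{x\in B_i,\,y\in B_j}\log\frac{1}{|x-y|},
\]
we have $E_0(B_i)=-m_i\log m_i$, since the $m_i$th roots of unity form a Fekete configuration. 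For the cross-terms, using $\prod_{\xi^m=1}(z-\xi)=z^m-1$ and the divisibility $m_j\mid m_i$ (which forces $\eta^{m_i}=1$ for every $\eta\in U_{m_j}$), we obtain, for each $y\in B_j$,
\[
\prod_{x\in B_i}|x-y|=\bigl|(\rho^{(j)}/\rho^{(i)})^{m_i}-1\bigr|.
\]
A short parity argument---$\rho_i^{m_i}=-1$ while $\rho_k^{m_i}=(\rho_k^{m_k})^{m_i/m_k}=1$ for $k>i$, since $m_i/m_k$ is even---gives $(\rho^{(j)}/\rho^{(i)})^{m_i}=-1$, so the product equals $2$. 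Summing over $y\in B_j$ and over pairs $(i,j)$ produces the key identity
\[
E_0(\alpha_N)=-\sum_{i=1}^t m_i\log m_i-2\log 2\sum_{j=2}^t(j-1)m_j,
\]
which in particular shows that $E_0(\alpha_N)$ depends only on $N$.

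The lower bound in \eqref{secondorderest} is immediate from the remark preceding the theorem, which says $E_0(\omega)\geq-N\log N$ for any $N$-point configuration on $S^1$. For the upper bound, I introduce the probability distribution $p_i=m_i/N$ on $\{1,\ldots,t\}$; the key formula rearranges to
\[
\frac{E_0(\alpha_N)+N\log N}{N}=-\sum_{i=1}^t p_i\log p_i-2\log 2\sum_{i=1}^t(i-1)p_i.
\]
Comparing to the reference distribution $q_i=3\cdot 4^{-i}$ on $\mathbb{N}$ (which is indeed a probability distribution), a direct manipulation yields the clean identity
\[
\frac{E_0(\alpha_N)+N\log N}{N}=\log(4/3)-D(p\|q),
\]
where $D(p\|q)=\sum_i p_i\log(p_i/q_i)$ is the Kullback--Leibler divergence. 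Since $p$ has finite support while $q$ has full support on $\mathbb{N}$, we have $p\neq q$, so Gibbs' inequality gives $D(p\|q)>0$, and the strict upper bound in \eqref{secondorderest} follows.

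For sharpness, take $N_k=(4^{k+1}-1)/3=\sum_{l=0}^k 4^l$, whose binary decomposition has $m_i=4^{k-i+1}$ for $i=1,\ldots,k+1$; then $p_i=m_i/N_k\to 3\cdot 4^{-i}=q_i$ pointwise as $k\to\infty$. Substituting into the explicit formula yields
\[
\frac{E_0(\alpha_{N_k})+N_k\log N_k}{N_k}=\log(4/3)+O(4^{-k}),
\]
establishing \eqref{ulsecondorder}. The principal obstacle is the cross-energy evaluation, specifically the verification that $(\rho^{(j)}/\rho^{(i)})^{m_i}=-1$ for every pair $i<j$ through the recursive structure of the rotation factors; once this parity fact is in hand, the remaining pieces are a Fekete computation, an algebraic rearrangement, and the information-theoretic identity above.
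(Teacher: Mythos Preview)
Your proof is correct and takes a genuinely different route from the paper in two places.

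First, for the closed-form energy formula the paper relies on the identity $\prod_{i=0}^{k-1}|a_k-a_i|=2^{\tau(k)}$ from Calvi--Van~Manh and then sums $\tau(k)$ via Lemma~\ref{lemmasum}, whereas you compute the cross-energies between the blocks $B_i$ directly from property~3). Your parity argument showing $(\rho^{(j)}/\rho^{(i)})^{m_i}=-1$ is correct and yields the same formula $E_0(\alpha_N)=-\sum_i m_i\log m_i-2\log 2\sum_j(j-1)m_j$; it is more self-contained in that it does not invoke the Calvi--Van~Manh result, at the cost of the small parity verification.

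Second, for the strict upper bound the paper reduces to $3N<2^{c_N}$ and applies the Hardy--Littlewood--P\'olya inequality \eqref{ineqHLP} with $p_i=4^{-i}$ and $b_i=2^{n_i+2i}$, while you rewrite the quantity as $\log(4/3)-D(p\Vert q)$ with $q_i=3\cdot4^{-i}$ and invoke Gibbs' inequality. These are essentially the same convexity argument (Jensen for $x\log x$), but your Kullback--Leibler formulation has the advantage of making both the strict inequality (since $p$ is finitely supported and $q$ is not, $p\neq q$) and the extremal sequence $N_k=(4^{k+1}-1)/3$ immediately transparent: along $N_k$ one has $p_i=q_i/(1-4^{-k-1})$, so $D(p\Vert q)=-\log(1-4^{-k-1})=O(4^{-k})$. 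The paper reaches the same extremal sequence by noting that along it the first term in \eqref{modenergyrewrite} vanishes identically; your information-theoretic identity explains \emph{why} this particular $q$ appears.
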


Observe that if $N$ is a power of $2$, then the lower bound in \eqref{secondorderest} is attained. The estimates in \eqref{secondorderest} imply \eqref{firstorderlimit}, but we shall provide a direct proof of \eqref{firstorderlimit} not using \eqref{secondorderest}.

\subsection{Results for greedy $s$-energy sequences on the unit circle}

Let $s>0$ and let $\omega=\{x_{1},\ldots,x_{N}\}\subset\mathbb{C}$ be a configuration of $N\geq 2$ distinct points. We denote by $E_{s}(\omega)$ the Riesz $s$-energy of $\omega$, that is,
\[
E_{s}(\omega):= \sum_{1\leq i\neq j\leq N}\frac{1}{|x_{i}-x_{j}|^{s}}= 2\sum_{1\leq i< j\leq N}\frac{1}{|x_{i}-x_{j}|^{s}}.
\]

In this paper we shall also analyze the asymptotic behavior of the Riesz $s$-energy of the first $N$ points of a greedy $s$-energy sequence on $S^{1}$. Recall that by definition, such sequences $(a_{n})_{n=0}^{\infty}\subset S^{1}$ are obtained by choosing an arbitrary $a_{0}\in S^{1}$ and selecting each subsequent $a_{n+1}\in S^{1}$, $n\geq 0$, such that
\[
\sum_{i=0}^{n}\frac{1}{|a_{n+1}-a_{i}|^{s}}=\inf_{z\in S^{1}}\sum_{i=0}^{n}\frac{1}{|z-a_{i}|^{s}}.
\] 
The first important observation we make is that for any $s>0$, greedy $s$-energy sequences coincide with Leja sequences on $S^{1}$ due to the symmetry of the circle. This can be easily deduced from an induction argument that uses Lemmas 4.1 and 4.2 from \cite{LopSaff} which we will omit here. 

Also, we emphasize that for any fixed $s>0$, if $\omega_{N}$ is an optimal $N$-point configuration on $S^{1}$ that minimizes the Riesz $s$-energy, i.e., $\omega_{N}$ satisfies
\[
E_{s}(\omega_{N})=\inf\{E_{s}(\omega): \omega\subset S^{1}, \card(\omega)=N\},
\]
then $\omega_{N}$ is again formed by $N$ equally spaced points, see \cite{Gotz2}. 

Following the notation used in \cite{BHS}, we will denote by $\mathcal{L}_{s}(N)$ the Riesz $s$-energy ($s>0$) of $N$ equally spaced points on the unit circle, i.e.,
\[
\mathcal{L}_{s}(N):=E_{s}(\{z_{k,N}\}_{k=1}^{N}),\qquad z_{k,N}:=\exp(2\pi i (k-1)/N),\quad k=1,\ldots,N.
\] 
It is easy to see that
\[
\mathcal{L}_{s}(N)=2^{-s} N \sum_{k=1}^{N-1}\left(\sin\frac{k\pi}{N}\right)^{-s},\qquad N\geq 2,
\]
using $|e^{i\xi}-e^{i\theta}|=2 |\sin (\frac{\xi-\theta}{2})|$. By convention we set $\mathcal{L}_{s}(1)=0$.

If the Riesz parameter $s$ satisfies $0<s<1$, one can still use potential theory, as in the logarithmic case, to study the asymptotic behavior of $\mathcal{L}_{s}(N)$ and the Riesz $s$-energy of greedy $s$-energy configurations. The following first-order asymptotic results are known and can be proved using the same techniques. We have
\begin{equation}\label{firstorderRiesz}
\lim_{N\rightarrow\infty}\frac{\mathcal{L}_{s}(N)}{N^{2}}=\lim_{N\rightarrow\infty}\frac{E_{s}(\alpha_{N})}{N^{2}}=I_{s}(\sigma),
\end{equation}
where $\sigma$ is the normalized arc length measure on $S^{1}$, which minimizes the energy
\begin{equation}\label{def:continuousRieszenergy}
I_{s}(\mu):=\iint\frac{1}{|x-y|^{s}}\,d\mu(x)\,d\mu(y)
\end{equation}
among all probability measures on $S^{1}$. For a proof of \eqref{firstorderRiesz}, see \cite{Landkof,LopSaff}. The limiting value in \eqref{firstorderRiesz} is given by
\[
I_{s}(\sigma)=\frac{1}{2\pi}\int_{0}^{2\pi}\frac{1}{|1-e^{i\phi}|^s}\,d\phi=2^{-s}\,\frac{\Gamma((1-s)/2)}{\sqrt{\pi}\,\Gamma(1-s/2)},
\]
cf. \cite[2.5.3.1]{Prud}.

In terms of second-order asymptotics for the sequence $\mathcal{L}_{s}(N)$, the following limit holds (see \cite{BHS}):
\begin{equation}\label{eq:secorderasympRieszFekete}
\lim_{N\rightarrow\infty}\frac{\mathcal{L}_{s}(N)-I_{s}(\sigma) N^{2}}{N^{1+s}}=\frac{2\zeta(s)}{(2\pi)^{s}},
\end{equation}
where $\zeta(s)$ is the analytic extension of the classical Riemann zeta function. It should be noted that in the range $s\in(0,1)$ we have $\zeta(s)<0$. In contrast to \eqref{eq:secorderasympRieszFekete}, it was shown in \cite[Corollary 2.5]{LopSaff} that in the case of greedy $s$-energy sequences on $S^{1}$ and the corresponding configurations $\alpha_{N}$, the sequence
\[
\left(\frac{E_{s}(\alpha_{N})-I_{s}(\sigma) N^{2}}{N^{1+s}}\right)_{N}
\]
is not convergent. In this paper we look more closely at this sequence.

In order to state our results in the Riesz setting, we need to introduce certain notations and definitions.

\begin{definition}\label{def:Thetap}
Let $p\geq 1$ be a fixed integer. We let $\Theta_{p}\subset [0,1]^{p}$ denote the set of all vectors $\vec{\theta}=(\theta_{1},\theta_{2},\ldots,\theta_{p})$ for which there exists an infinite sequence $\mathcal{N}$ of integers $N=2^{n_{1}}+2^{n_{2}}+\cdots+2^{n_{p}}$, $n_{1}>n_{2}>\cdots>n_{p}\geq 0$, satisfying
\begin{equation}\label{eq:conddefthetavec}
\lim_{N\in\mathcal{N}}\frac{2^{n_{i}}}{N}=\theta_{i},\qquad \mbox{for all}\,\,i=1,\ldots,p.
\end{equation}  
\end{definition}

\smallskip

Note that if $(\theta_{1},\ldots,\theta_{p})\in\Theta_{p}$, then 
\begin{equation}\label{eq:propvecinTheta}
\sum_{i=1}^{p}\theta_{i}=1.
\end{equation}
On $\Theta_{p}\times [0,\infty)$ we define the following function
\begin{equation}\label{def:H}
H((\theta_{1},\ldots,\theta_{p});s):=\sum_{k=1}^{p}\theta_{k}^{s}\,\Big(2 (2^s-1) \left(\sum_{j=k+1}^{p}\theta_{j}\right)+\theta_{k}\Big).
\end{equation}
It follows from \eqref{eq:propvecinTheta} that for any $\vec{\theta}=(\theta_{1},\ldots,\theta_{p})\in\Theta_{p}$ we have
\[
H(\vec{\theta};0)=H(\vec{\theta};1)=1.
\]
In Section~\ref{section:RieszenergyLeja} we give some further remarks about the sets $\Theta_{p}$ and the functions $H$ in \eqref{def:H}.
The graphs of some functions $H$ associated with three vectors $\vec{\theta}$ are shown in Figure~\ref{graphsHfunctions}.

\begin{figure}[h]
\centering
\includegraphics[totalheight=2.5in,keepaspectratio]{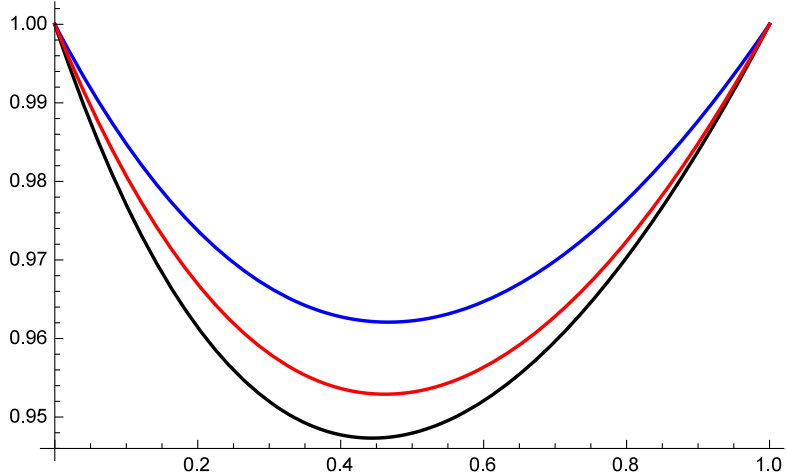}
\caption{In increasing order, we show the graphs of the functions \eqref{def:H} associated with the vectors $\vec{\theta}=(16/21, 4/21, 1/21)$, $\vec{\theta}=(4/5,1/5)$ and $\vec{\theta}=(2/3,1/3)$, respectively.}\label{graphsHfunctions}
\end{figure}

\begin{definition}\label{def:functionsh}
Let $0<s<1$ be fixed. Using the function \eqref{def:H} we introduce the notations
\begin{align}
\underline{h}_{p}(s) & :=\inf_{\vec{\theta}\in\Theta_{p}} H(\vec{\theta};s),\qquad p\in\mathbb{N},\label{def:hplower:1}\\
\underline{h}(s) & :=\inf_{p\in\mathbb{N}} \underline{h}_{p}(s).\label{def:hplower:2}
\end{align}
Similarly, for $s>1$ fixed we define
\begin{align}
\overline{h}_{p}(s) & :=\sup_{\vec{\theta}\in\Theta_{p}} H(\vec{\theta};s),\qquad p\in\mathbb{N},\label{def:hpupper:1}\\
\overline{h}(s) & :=\sup_{p\in\mathbb{N}} \overline{h}_{p}(s).\label{def:hpupper:2}
\end{align}
\end{definition}

Unfortunately we have not found an explicit expression of the functions $\underline{h}(s)$ and $\overline{h}(s)$. Our next result is the following.

\begin{theorem}\label{theo:secorderasympRieszenergy:1}
Let $s\in(0,1)$ be fixed, and let $(a_{n})_{n=0}^{\infty}$ be a greedy $s$-energy sequence on $S^{1}$. Then, for the sequence of configurations $\alpha_{N}=(a_{n})_{n=0}^{N-1}$ we have
\begin{equation}\label{eq:secorderasympRieszenergy:1}
\limsup_{N\rightarrow\infty}\frac{E_{s}(\alpha_{N})-I_{s}(\sigma) N^{2}}{N^{1+s}}=\underline{h}(s)\,\frac{2\zeta(s)}{(2\pi)^{s}},
\end{equation}
where $\zeta(s)$ is the analytic extension of the classical Riemann zeta function, and $\underline{h}(s)$ is defined in \eqref{def:hplower:1}--\eqref{def:hplower:2}. We also have
\begin{equation}\label{secondorderasympRieszenergy:2}
\liminf_{N\rightarrow\infty}\frac{E_{s}(\alpha_{N})-I_{s}(\sigma) N^{2}}{N^{1+s}}=\frac{2\zeta(s)}{(2\pi)^{s}}.
\end{equation}
In particular, the sequence $\frac{E_{s}(\alpha_{N})-I_{s}(\sigma) N^{2}}{N^{1+s}}$ is not convergent since $\underline{h}(s)<1$ for every $s\in (0,1)$.
\end{theorem}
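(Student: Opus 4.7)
The plan is to expand $E_{s}(\alpha_{N})$ block-by-block using the recursive structure of Leja sections (Property~3 in the introduction). Writing $N = \sum_{i=1}^{p} M_i$ with $M_i := 2^{n_i}$, $n_1 > \cdots > n_p \geq 0$, the section $\alpha_{N}$ decomposes as $(B_{1},\ldots,B_{p})$, where each block $B_i$ is the rotation by $R_i := \prod_{k=1}^{i-1}\rho_k$ of the $M_i$-th roots of unity. The self-energy of each block is $E_{s}(B_i) = \mathcal{L}_{s}(M_i)$, for which I will use an effective version of \eqref{eq:secorderasympRieszFekete}:
\[
\mathcal{L}_{s}(M) = I_{s}(\sigma)M^{2} + \frac{2\zeta(s)}{(2\pi)^{s}}M^{1+s} + O(M^{1+s-\delta}),\qquad \delta>0,
\]
available from the Euler--Maclaurin / Epstein--Hurwitz analysis underlying \cite{BHS}. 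The main task is then to evaluate the cross sums $\langle B_i, B_j\rangle_{s} := \sum_{x\in B_i,\,y\in B_j}|x-y|^{-s}$.

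The crucial step---and the principal obstacle---is the claim that for $i<j$ the rotation $\zeta := R_j/R_i = \prod_{k=i}^{j-1}\rho_{k}$ is actually an $M_i$-th root of $-1$, not merely an $M_{j-1}$-th root. This is a parity computation: write $\rho_k = e^{i\pi(2m_k+1)/M_k}$ and place $\sum_{k=i}^{j-1}(2m_k+1)/M_k$ over the common denominator $M_i$; the resulting numerator $\sum_{k=i}^{j-1}(2m_k+1)(M_i/M_k)$ is odd, because $M_i/M_i=1$ is odd while $M_i/M_k = 2^{n_i-n_k}$ is even for $k>i$. Consequently every point of $B_j$ lies at an angular midpoint between two consecutive points of $B_i$. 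Since $B_i$ is invariant under rotation by $2\pi/M_i$, its $s$-potential $U_i(y):=\sum_{x\in B_i}|y-x|^{-s}$ is constant on $B_j$, and splitting the self-potential of the $2M_i$-th roots of unity at one of its own points into its even- and odd-indexed subsums identifies this common value:
\[
\langle B_i, B_j\rangle_{s} = M_j\,U_i(y_0) = M_j\!\left(\frac{\mathcal{L}_{s}(2M_i)}{2M_i}-\frac{\mathcal{L}_{s}(M_i)}{M_i}\right).
\]

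Substituting the effective asymptotic into both the self- and cross-term expressions, and using $I_{s}(\sigma)N^{2} = I_{s}(\sigma)\sum_{i}M_i^{2} + 2I_{s}(\sigma)\sum_{i<j}M_iM_j$ together with $(2M_i)^{s}-M_i^{s}=(2^{s}-1)M_i^{s}$, a direct calculation gives
\[
\frac{E_{s}(\alpha_{N}) - I_{s}(\sigma)N^{2}}{N^{1+s}} = \frac{2\zeta(s)}{(2\pi)^{s}}\,H(\vec{\theta}(N);s) + o(1),\qquad \vec{\theta}(N):=(M_1/N,\ldots,M_p/N),
\]
with $o(1)=O(N^{-\delta})$, since $\sum_i M_i^{1+s}\le N^{1+s}$ and (using $\sum_{j>i}M_j<M_i$) also $\sum_{i<j}M_i^{s}M_j<\sum_i M_i^{1+s}\le N^{1+s}$. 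Because $2\zeta(s)/(2\pi)^{s}<0$ for $s\in(0,1)$, passing to $\limsup$ converts the supremum into an infimum of $H$ over subsequential limits of $\vec{\theta}(N)$, which by Definition~\ref{def:functionsh} equals $\underline{h}(s)$; this yields \eqref{eq:secorderasympRieszenergy:1}. For \eqref{secondorderasympRieszenergy:2}, the subsequence $N=2^{n}$ gives $p=1$, $H=1$, and attains $2\zeta(s)/(2\pi)^{s}$; the matching lower bound $\liminf\geq 2\zeta(s)/(2\pi)^{s}$ follows from $E_{s}(\alpha_{N})\geq \mathcal{L}_{s}(N)$ (equally spaced points minimize Riesz $s$-energy on $S^{1}$, \cite{Gotz2}) combined with \eqref{eq:secorderasympRieszFekete}. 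Finally, $\underline{h}(s)<1$ on $(0,1)$ is checked at $\vec{\theta}=(2/3,1/3)\in\Theta_{2}$: one computes $H((2/3,1/3);s)=(2^{1+2s}+1)/3^{1+s}$, which equals $1$ at $s=0$ and $s=1$; since $f(s):=3\cdot 3^{s}-2\cdot 4^{s}-1$ satisfies $f(0)=f(1)=0$ with $f'(0)=\ln(27/16)>0$ and $f'(1)=\ln(19683/65536)<0$, one concludes $f>0$ throughout $(0,1)$, whence $H<1$ there.
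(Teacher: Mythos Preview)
Your derivation of the block decomposition and the cross-sum formula
\(\langle B_i,B_j\rangle_s = M_j\bigl(\mathcal{L}_s(2M_i)/(2M_i)-\mathcal{L}_s(M_i)/M_i\bigr)\)
is the same as the paper's Proposition~\ref{prop:Reiszenergybinary} (your parity computation makes explicit what the paper leaves to \cite{BiaCal,LopSaff}). Where your route diverges is in the passage to the limit. The paper uses only the bare convergence $\mathcal{R}_s(M)\to 2\zeta(s)/(2\pi)^s$ and its boundedness; to cope with subsequences along which $\tau(N)\to\infty$ it performs a truncation at level $M=M(\epsilon)$ and controls four separate tail pieces $S_{N,M,2}$, $D_{N,M,2}$, $E_{N,M,2}$ before reducing to the fixed-$p$ case. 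You instead import the \emph{effective} remainder $\mathcal{L}_s(M)=I_s(\sigma)M^2+\tfrac{2\zeta(s)}{(2\pi)^s}M^{1+s}+O(M^{1+s-\delta})$ from \cite{BHS}, which yields the clean uniform identity
\(\tfrac{E_s(\alpha_N)-I_s(\sigma)N^2}{N^{1+s}}=\tfrac{2\zeta(s)}{(2\pi)^s}H(\vec\theta(N);s)+O(N^{-\delta})\)
and bypasses the truncation entirely. This is a genuine simplification, at the cost of a stronger input; the paper's argument is more self-contained.

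Two points in your write-up need tightening. First, the sentence ``passing to $\limsup$ converts the supremum into an infimum of $H$ over subsequential limits of $\vec\theta(N)$, which \ldots\ equals $\underline{h}(s)$'' is not a proof: the vectors $\vec\theta(N)$ live in spaces of varying dimension, so ``subsequential limits'' is not well-defined. What you actually need is (i) $\vec\theta(N)\in\Theta_{\tau(N)}$ for every $N$ (take the constant sequence $2^kN$, $k\ge 0$, in Definition~\ref{def:Thetap}, or invoke the explicit description \eqref{def:vectorsinThetap}), so that $H(\vec\theta(N);s)\ge\underline{h}(s)$ always; and (ii) for any $\vec\theta\in\Theta_p$ the defining sequence $\mathcal{N}$ has $H(\vec\theta(N);s)\to H(\vec\theta;s)$ by continuity of $H(\cdot\,;s)$ on $[0,1]^p$. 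Together these give $\liminf_N H(\vec\theta(N);s)=\underline{h}(s)$, which is what your formula requires. Second, your argument that $f(s)=3\cdot 3^s-2\cdot 4^s-1>0$ on $(0,1)$ from $f(0)=f(1)=0$, $f'(0)>0$, $f'(1)<0$ is incomplete as stated; you also need that $f'$ changes sign exactly once, which follows since $f'(s)/3^s=3\ln 3-2\ln 4\cdot(4/3)^s$ is strictly decreasing.
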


In contrast to the case $s\in(0,1)$, if $s\geq 1$ potential-theoretic tools are no longer available to study the asymptotic behavior of $\mathcal{L}_{s}(N)$ or $E_{s}(\alpha_{N})$. This is due to the fact that in this case the continuous Riesz $s$-energy \eqref{def:continuousRieszenergy} of any probability measure $\mu$ on $S^{1}$ is infinite. 

As a particular case of a general result for rectifiable Jordan curves in $\mathbb{R}^{d}$ proved in \cite[Theorem 3.2]{MMRS}, we know that if $s>1$ then
\begin{equation}\label{eq:asympfirstorderRieszsupercrit}
\lim_{N\rightarrow\infty}\frac{\mathcal{L}_{s}(N)}{N^{1+s}}=\frac{2\zeta(s)}{(2\pi)^{s}},
\end{equation}
where $\zeta(s)=\sum_{n=1}^{\infty}n^{-s}$ denotes now the classical Riemann zeta function. 

Concerning greedy $s$-energy sequences, we have the following result, analogous to Theorem~\ref{theo:secorderasympRieszenergy:1}.

\begin{theorem}\label{theo:firstorderasympRieszenergy}
Let $s>1$ be fixed, and let $(a_{n})_{n=0}^{\infty}$ be a greedy $s$-sequence on $S^{1}$. Then, for the sequence of configurations $\alpha_{N}=(a_{n})_{n=0}^{N-1}$ we have
\begin{equation}\label{eq:firstorderasympRieszenergy:1}
\limsup_{N\rightarrow\infty}\frac{E_{s}(\alpha_{N})}{N^{1+s}}=\overline{h}(s)\,\frac{2\zeta(s)}{(2\pi)^{s}},
\end{equation}
where $\zeta(s)$ is the classical Riemann zeta function, and $\overline{h}(s)$ is defined in \eqref{def:hpupper:1}--\eqref{def:hpupper:2}. We also have
\begin{equation}\label{eq:firstorderasympRieszenergy:2}
\liminf_{N\rightarrow\infty}\frac{E_{s}(\alpha_{N})}{N^{1+s}}=\frac{2\zeta(s)}{(2\pi)^{s}}.
\end{equation}
In particular, the sequence $\frac{E_{s}(\alpha_{N})}{N^{1+s}}$ is not convergent since $\overline{h}(s)>1$ for every $s>1$.
\end{theorem}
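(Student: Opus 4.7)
The plan is to exploit the recursive decomposition of Leja sections given in property~3) to express $E_s(\alpha_N)$ explicitly in terms of the dyadic quantities $\mathcal{L}_s(2^n)$, and then feed in the asymptotics \eqref{eq:asympfirstorderRieszsupercrit}. The argument is structurally parallel to that of Theorem~\ref{theo:secorderasympRieszenergy:1}: the same function $H$ from \eqref{def:H} arises naturally, and only the direction of optimization is reversed, because $\zeta(s)>0$ for $s>1$ whereas $\zeta(s)<0$ for $s\in(0,1)$.

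Writing $N=2^{n_1}+\cdots+2^{n_p}$ with $n_1>\cdots>n_p\ge 0$, decompose $\alpha_N=(B_1,\ldots,B_p)$ with $B_k=\omega_k\cdot\{z:z^{2^{n_k}}=1\}$ and $\omega_l\omega_k^{-1}=\rho_k\rho_{k+1}\cdots\rho_{l-1}$ for $l>k$, as in property~3). The key observation is that $\omega_l\omega_k^{-1}$ is itself a $2^{n_k}$-th root of $-1$: the factor $\rho_k$ contributes an odd multiple of $\pi/2^{n_k}$ to the argument, while each later $\rho_i$ (for $i>k$), being a $2^{n_i}$-th root of $-1$ with $n_i<n_k$, contributes an even multiple of $\pi/2^{n_k}$. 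Consequently the $2^{n_l}$ points of $B_l$ lie exactly at midpoints of the $2^{n_k}$ arcs cut out by $B_k$, occupying every $2^{n_k-n_l}$-th such midpoint.

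Separating $E_s(\alpha_N)$ into intra-block and inter-block contributions and exploiting this geometric observation together with the rotational symmetry of the circle, a short trigonometric computation (using $|e^{i\xi}-e^{i\theta}|=2|\sin(\frac{\xi-\theta}{2})|$ and splitting $\sum_{j=1}^{2^{n_k+1}-1}|\sin(\pi j/2^{n_k+1})|^{-s}$ into its odd- and even-indexed parts) yields the closed form
\[
\sum_{\substack{a\in B_k\\ b\in B_l}}|a-b|^{-s} \;=\; \frac{2^{n_l}}{2^{n_k+1}}\Bigl(\mathcal{L}_s(2^{n_k+1})-2\mathcal{L}_s(2^{n_k})\Bigr).
\]
Inserting \eqref{eq:asympfirstorderRieszsupercrit} and setting $\theta_k=2^{n_k}/N$, a routine computation produces
\[
\frac{E_s(\alpha_N)}{N^{1+s}} \;=\; \frac{2\zeta(s)}{(2\pi)^s}\,H(\vec{\theta}_N;s) + o(1),\qquad N\to\infty,
\]
where $\vec{\theta}_N=(\theta_1,\ldots,\theta_{p(N)})$ and the $o(1)$ is uniform in $p(N)$ after a standard truncation step exploiting the bound $\theta_k\le 2^{-(k-1)}$.

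The two statements of the theorem now follow by optimizing over $\bigcup_p\Theta_p$. For each $\vec{\theta}^*\in\Theta_p$ one obtains a subsequence along which $\frac{E_s(\alpha_N)}{N^{1+s}}\to\frac{2\zeta(s)}{(2\pi)^s}H(\vec{\theta}^*;s)$; taking suprema gives \eqref{eq:firstorderasympRieszenergy:1}. For \eqref{eq:firstorderasympRieszenergy:2} one notes that $H((1);s)=1$, so the dyadic subsequence $N=2^n$ realizes the value $\frac{2\zeta(s)}{(2\pi)^s}$. The main obstacle is then the algebraic inequality $H(\vec{\theta};s)\ge 1$ for every $\vec{\theta}\in\bigcup_p\Theta_p$ and every $s\ge 1$, which is what forbids any other subsequence from doing better. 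I would prove it by starting from the two identities $H(\vec{\theta};0)=H(\vec{\theta};1)=1$, then showing convexity of $s\mapsto H(\vec{\theta};s)$ (or at least that $\partial_s H\ge 0$ at $s=1^+$ under the ordering constraint $\theta_k\ge 2\theta_{k+1}$), and finally handling the $p\to\infty$ case by a truncation bounding the tails by $O(2^{-p_0 s})$. The strict inequality $\overline{h}(s)>1$ stated at the end of the theorem follows by testing any nondegenerate vector such as $(1-\epsilon,\epsilon)\in\Theta_2$: a direct expansion gives $H((1-\epsilon,\epsilon);s)=1+(2^{s+1}-3-s)\epsilon+O(\epsilon^2)$, whose leading coefficient is strictly positive for $s>1$.
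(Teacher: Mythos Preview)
Your decomposition of $E_s(\alpha_N)$ into intra- and inter-block pieces, and the resulting closed form for the cross-terms, is exactly Proposition~\ref{prop:Reiszenergybinary} of the paper; your packaging of the truncation as a uniform $o(1)$ in the identity $E_s(\alpha_N)/N^{1+s}=\frac{2\zeta(s)}{(2\pi)^s}H(\vec\theta_N;s)+o(1)$ is a clean restatement of the paper's proof of Theorem~\ref{theo:secorderasympRieszenergy:1}. So for \eqref{eq:firstorderasympRieszenergy:1} your argument and the paper's coincide.

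Where you diverge is in \eqref{eq:firstorderasympRieszenergy:2}. The paper disposes of the liminf in one line: $E_s(\alpha_N)\ge\mathcal{L}_s(N)$ for every $N$ (equally spaced points are optimal), with equality when $N=2^n$, so \eqref{eq:asympfirstorderRieszsupercrit} gives the result immediately. You instead reduce to the algebraic inequality $H(\vec\theta;s)\ge 1$ on $\bigcup_p\Theta_p$ for $s>1$, and propose to obtain it from convexity of $s\mapsto H(\vec\theta;s)$. This route does work, but it is not as automatic as you suggest: the paper itself only asserts convexity under the stronger hypothesis $\theta_k\ge 2\sum_{j>k}\theta_j$ (see the remark following \eqref{def:vectorsinThetap}), which fails e.g.\ for $\vec\theta=(4/7,2/7,1/7)\in\Theta_3$. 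What actually rescues convexity on all of $\Theta_p$ is a cancellation: writing $H=\sum_k\bigl[2\sigma_k(2\theta_k)^s+(\theta_k-2\sigma_k)\theta_k^s\bigr]$ with $\sigma_k=\sum_{j>k}\theta_j$, the coefficient $\theta_k-2\sigma_k$ can be negative only when $\theta_{k+1}=\theta_k/2$, in which case the basis function $\theta_k^s$ coincides with $(2\theta_{k+1})^s$ and the combined coefficient $(\theta_k-2\sigma_k)+2\sigma_{k+1}=\theta_k-2\theta_{k+1}=0$. Once this is observed, all effective coefficients are nonnegative and convexity follows. Your fallback ``$\partial_sH\ge 0$ at $s=1^+$'' is by itself insufficient, since it is only local. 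In short, your liminf argument can be completed, but the paper's use of the minimality of $\mathcal{L}_s(N)$ is both shorter and avoids the algebra entirely.
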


We remark that in \cite[Proposition 2.6]{LopSaff} it was already shown that the sequence $\frac{E_{s}(\alpha_{N})}{N^{1+s}}$ is not convergent. We also want to emphasize that the following result, related with \eqref{eq:firstorderasympRieszenergy:1}, can be deduced from \cite[Theorem 2.9]{LopSaff}. If $(x_{n})_{n=0}^{\infty}\subset S^{1}$ is any sequence of pairwise distinct points on the unit circle and $s>1$, then for the sequence of configurations $\omega_{N}=\{x_{0},\ldots,x_{N-1}\}$ we have
\[
\limsup_{N\rightarrow\infty}\frac{E_{s}(\omega_{N})}{N^{1+s}}>\frac{2\zeta(s)}{(2\pi)^{s}}.
\]

We finally consider the critical case $s=1$. As a corollary of \cite[Theorem 3.2]{MMRS} and \cite[Theorem 2.10]{LopSaff} we know that
\[
\lim_{N\rightarrow\infty}\frac{\mathcal{L}_{1}(N)}{N^2 \log N}=\lim_{N\rightarrow\infty}\frac{E_{1}(\alpha_{N})}{N^2 \log N}=\frac{1}{\pi},
\]
for any greedy $s$-energy sequence ($s=1$) on $S^{1}$ and the corresponding configurations $\alpha_{N}$. Moreover, we have the following second-order asymptotics (see \cite{BHS}):
\begin{equation}\label{eq:secorderFeketecrit}
\lim_{N\rightarrow\infty}\frac{\mathcal{L}_{1}(N)-\frac{1}{\pi}N^2 \log N}{N^2}=\frac{1}{\pi}(\gamma-\log(\pi/2)),
\end{equation}
where $\gamma=\lim_{N\rightarrow\infty} (1+\frac{1}{2}+\cdots+\frac{1}{N}-\log N)$ denotes the Euler-Mascheroni constant.

In our next result we consider the corresponding second-order expression
\[
\frac{E_{1}(\alpha_{N})-\frac{1}{\pi}\,N^{2}\,\log N}{N^2}.
\]
In order to state this result we need some definitions. 
For $\vec{\theta}=(\theta_{1},\ldots,\theta)\in\Theta_{p}$ we let
\begin{equation}\label{def:K}
K(\theta_{1},\ldots,\theta_{p}):= 2\log 2+\sum_{k=1}^{p}\theta_{k}^2\, \log \left(\theta_{k}/4\right)+2\sum_{k=1}^{p-1}\left(\sum_{j=k+1}^{p}\theta_{j}\right) \theta_{k}\log \theta_{k},
\end{equation}
where if $\theta_{k}=0$, we understand in \eqref{def:K} that $\theta_{k} \log \theta_{k}=0$. Let
\begin{equation}\label{def:kappa}
\kappa:=\sup_{p\in\mathbb{N}}\, \sup_{\vec{\theta}\in\Theta_{p}} K(\vec{\theta}).
\end{equation}

\begin{theorem}\label{theo:criticalcase}
Let $(a_{n})_{n=0}^{\infty}$ be a greedy $s$-energy sequence on $S^{1}$ for $s=1$. Then, for the sequence of configurations $\alpha_{N}=(a_{n})_{n=0}^{N-1}$ we have
\begin{equation}\label{secondorderasympcritcase:1}
\limsup_{N\rightarrow\infty}\frac{E_{1}(\alpha_{N})-\frac{1}{\pi}N^2 \log N}{N^2}=\frac{1}{\pi}(\gamma-\log(\pi/2)+\kappa),
\end{equation}
where $\kappa$ is the constant in \eqref{def:kappa}. We also have
\begin{equation}\label{secondorderasympcritcase:2}
\liminf_{N\rightarrow\infty}\frac{E_{1}(\alpha_{N})-\frac{1}{\pi}N^2 \log N}{N^2}=\frac{1}{\pi}(\gamma-\log(\pi/2)).
\end{equation}
In particular, the sequence $\frac{E_{1}(\alpha_{N})-\frac{1}{\pi}N^2 \log N}{N^2}$ is not convergent since $\kappa>0$.
\end{theorem}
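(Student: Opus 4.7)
The strategy parallels the arguments underlying Theorems~\ref{theo:secorderasympRieszenergy:1} and~\ref{theo:firstorderasympRieszenergy}, adapted to the logarithmic character of the critical exponent $s=1$. Writing $N=2^{n_{1}}+\cdots+2^{n_{p}}$ with $n_{1}>\cdots>n_{p}\geq 0$, property~3) from the introduction decomposes the $N$-Leja section as $\alpha_{N}=\bigsqcup_{k=1}^{p}\sigma_{k}U_{m_{k}}$, where $m_{k}:=2^{n_{k}}$, $U_{m}$ is the set of $m$th roots of unity, $\sigma_{1}=1$, $\sigma_{k}=\rho_{1}\cdots\rho_{k-1}$ for $k\geq 2$, and each $\rho_{\ell}$ is a $2^{n_{\ell}}$th root of $-1$. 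This yields
\[
E_{1}(\alpha_{N})=\sum_{k=1}^{p}\mathcal{L}_{1}(m_{k})+2\sum_{1\leq i<j\leq p}E_{ij},
\]
with $E_{ij}$ denoting the cross-energy between $\sigma_{i}U_{m_{i}}$ and $\sigma_{j}U_{m_{j}}$. The self-energies are handled by \eqref{eq:secorderFeketecrit}, so the whole argument reduces to a second-order analysis of $E_{ij}$.

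\textbf{The cross-energy lemma.} The technical core is the expansion
\[
E_{ij}=\frac{m_{i}m_{j}}{\pi}\bigl[\log m_{i}+\gamma-\log(\pi/2)+2\log 2\bigr]+O(m_{j}/m_{i}).
\]
The key observation is that $\rho:=\sigma_{j}/\sigma_{i}=\rho_{i}\cdots\rho_{j-1}$ is itself a $2^{n_{i}}$th root of $-1$: for each $\ell>i$ the quotient $2^{n_{i}}/2^{n_{\ell}}$ is an even integer, so $\rho_{\ell}^{2^{n_{i}}}=1$, while $\rho_{i}^{2^{n_{i}}}=-1$. Writing $m=m_{i}$, $M=m_{j}$, $r=m/M$, $\rho=e^{i\pi(2c+1)/m}$ and using $|e^{i\alpha}-e^{i\beta}|=2|\sin((\alpha-\beta)/2)|$, the inner sum over $k$ reduces to
\[
\sum_{k=0}^{m-1}\frac{1}{2\bigl|\sin\bigl(\tfrac{\pi}{2m}(2(k-c-r\ell)-1)\bigr)\bigr|},
\]
which is independent of $\ell$ and $c$ because $k\mapsto 2(k-c-r\ell)-1$ cycles through all odd residues modulo $2m$. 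Hence $E_{ij}=M\cdot S(m)$ with $S(m)=\tfrac12\sum_{j\,\mathrm{odd},\,1\leq j\leq 2m-1}\csc(\pi j/(2m))$. Subtracting the even indices from the classical asymptotic $\sum_{k=1}^{n-1}\csc(\pi k/n)=(2n/\pi)[\gamma+\log(2n/\pi)]+O(n^{-1})$ applied at $n=2m$ and $n=m$ gives $S(m)=(m/\pi)[\gamma+\log(8m/\pi)]+O(m^{-1})$, which is equivalent to the displayed bracket via $\log(8/\pi)=2\log 2-\log(\pi/2)$.

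\textbf{Assembly and conclusion.} Substituting \eqref{eq:secorderFeketecrit} and the cross-energy lemma, setting $\theta_{k}^{(N)}:=m_{k}/N$, and using $\sum_{k}\theta_{k}=1$ and $2\sum_{i<j}\theta_{i}\theta_{j}=1-\sum_{k}\theta_{k}^{2}$, the $N^{2}\log N$ contributions from diagonal and off-diagonal pieces collapse to exactly $\pi^{-1}N^{2}\log N$, and the remaining $N^{2}$-level terms rearrange into
\[
\frac{E_{1}(\alpha_{N})-\pi^{-1}N^{2}\log N}{N^{2}}=\frac{1}{\pi}\bigl(\gamma-\log(\pi/2)+K(\vec\theta^{(N)})\bigr)+o(1),
\]
with $K$ as in \eqref{def:K}; the $o(1)$ is uniform because the $o(m_{k}^{2})$ errors sum to $o(N^{2})$ over the $p\leq\log_{2}N$ pieces and the cross-errors accumulate only to $O(\log N)$. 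For \eqref{secondorderasympcritcase:1}, extracting a subsequence $N_{j}$ along which $\vec\theta^{(N_{j})}$ converges to some $\vec\theta\in\Theta_{p}$ and using continuity of $K$ delivers the limsup formula. For \eqref{secondorderasympcritcase:2}, the Fekete inequality $E_{1}(\alpha_{N})\geq\mathcal{L}_{1}(N)$ combined with \eqref{eq:secorderFeketecrit} gives $\liminf\geq\pi^{-1}(\gamma-\log(\pi/2))$, and the subsequence $N=2^{n}$ (for which $p=1$, $\vec\theta=(1)$, $K=0$) realizes equality; the strict bound $\kappa\geq K(2/3,1/3)=\tfrac{16}{9}\log 2-\log 3>0$ rules out convergence. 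The main technical obstacle is the cross-energy lemma, specifically the arithmetic collapse of $E_{ij}$ to an odd-index cosecant sum depending only on $m$ and $M$, and the precision required in the cosecant asymptotic so that the errors remain $o(N^{2})$ after summation over the $O((\log N)^{2})$ cross pairs.
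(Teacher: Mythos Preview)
Your energy decomposition via the cross-energy lemma is equivalent to the paper's Proposition~\ref{prop:Reiszenergybinary}: indeed $S(m)=(2m)^{-1}\mathcal{L}_{1}(2m)-m^{-1}\mathcal{L}_{1}(m)$, so your diagonal-plus-cross form rearranges exactly into~\eqref{Rieszenergybinary} at $s=1$. Your master formula
\[
\frac{E_{1}(\alpha_{N})-\pi^{-1}N^{2}\log N}{N^{2}}=\frac{1}{\pi}\bigl(\gamma-\log(\pi/2)+K(\vec\theta^{(N)})\bigr)+o(1)
\]
is correct and is in fact a cleaner packaging than the paper's intermediate expression~\eqref{eq:EnergyNcritcase}. (A minor point: the error $O(m^{-1})$ you claim for $S(m)$ is stronger than what~\eqref{eq:secorderFeketecrit} alone provides; but the weaker $o(m)$ suffices, since $\sum_{i<j}o(m_{i}m_{j})=o(N^{2})$ by splitting according to whether $m_{i}$ exceeds a large threshold.)

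There is, however, a genuine gap in your limsup step. The phrase ``extracting a subsequence $N_{j}$ along which $\vec\theta^{(N_{j})}$ converges to some $\vec\theta\in\Theta_{p}$'' tacitly assumes $\tau(N_{j})$ stays bounded, but a subsequence realizing the limsup may have $\tau(N)\to\infty$, and then $\vec\theta^{(N)}$ lives in spaces of increasing dimension and no compactness argument is available. The paper confronts exactly this case with a truncation: for arbitrary $\epsilon>0$ it fixes $M$ so that $\sum_{k>M}2^{n_{k}}/N<\epsilon$, shows all tail contributions are $O(\epsilon)$, and then along a further subsequence the head converges to $L+\pi^{-1}K(\theta_{1},\dots,\theta_{M})$ for some $(\theta_{1},\dots,\theta_{M})\in\Theta_{M}$. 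Your master formula actually allows a shorter repair that bypasses truncation entirely: by the alternative description of $\Theta_{p}$ recorded in Section~\ref{section:RieszenergyLeja}, one has $\vec\theta^{(N)}\in\Theta_{\tau(N)}$ for \emph{every} $N$, hence $K(\vec\theta^{(N)})\leq\kappa$ for all $N$ and the upper bound on the limsup follows immediately; conversely $K(\vec\theta^{(2N)})=K(\vec\theta^{(N)})$, so every value $K(\vec\theta)$ with $\vec\theta\in\Theta_{p}$ is attained infinitely often, giving the matching lower bound. You should state one of these two arguments explicitly.
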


We remark that in \cite[Corollary 2.8]{LopSaff} it was already shown that the sequence $\frac{E_{1}(\alpha_{N})-\frac{1}{\pi} N^{2}\log N}{N^2}$ is not convergent.

This paper is organized as follows. In Section~\ref{section:conjectures} we formulate some conjectures for general sequences on the unit circle. In Section~\ref{section:firstorderasymp} we prove Theorem~\ref{theo:firstorderasymp}, and in Section~\ref{section:secondorderest} we prove Theorem~\ref{theo:secondorderest}. In Section~\ref{section:RieszenergyLeja} we give the proofs of the results in the Riesz setting.

\section{Some conjectures}\label{section:conjectures}

From the energy point of view, it is clear that Leja sequences and greedy $s$-energy sequences are special within the class of general sequences on the unit circle, as each point in the sequence is selected in an optimal way. In fact, we can also define the point $a_{n}$ in a greedy $s$-energy sequence as a point satisfying
\[
E_{s}(\{a_{0},\ldots,a_{n-1},a_{n}\})=\inf_{x\in S^{1}} E_{s}(\{a_{0},\ldots,a_{n-1},x\}),\qquad n\geq 1.
\]
Because of this property, it is reasonable to expect that greedy sequences provide the lowest upper limit for the normalized energy expressions that have been described above. We state this as a conjecture.

\begin{conjecture}
Let $(x_{n})_{n=0}^{\infty}\subset S^{1}$ be an arbitrary sequence on $S^{1}$ such that $x_{i}\neq x_{j}$ for every $i\neq j$, and let $\omega_{N}=\{x_{0},\ldots,x_{N-1}\}$, $N\geq 2$. Then
\[
\limsup_{N\rightarrow\infty}\frac{E_{0}(\omega_{N})+N\log N}{N}\geq \log(4/3);
\]
for $s\in(0,1)$,
\[
\limsup_{N\rightarrow\infty}\frac{E_{s}(\omega_{N})-I_{s}(\sigma) N^{2}}{N^{1+s}}\geq \underline{h}(s)\,\frac{2\zeta(s)}{(2\pi)^{s}};
\]
for $s>1$,
\[
\limsup_{N\rightarrow\infty}\frac{E_{s}(\omega_{N})}{N^{1+s}}\geq \overline{h}(s)\,\frac{2\zeta(s)}{(2\pi)^{s}};
\]
and for $s=1$,
\[
\limsup_{N\rightarrow\infty}\frac{E_{s}(\omega_{N})-\frac{1}{\pi}N^{2}\log N}{N^2}\geq \frac{1}{\pi}(\gamma-\log(\pi/2)+\kappa),
\]
where the expressions on the right-hand sides of the last three inequalities are the same as those appearing 
in \eqref{eq:secorderasympRieszenergy:1}, \eqref{eq:firstorderasympRieszenergy:1} and \eqref{secondorderasympcritcase:1}. 
\end{conjecture}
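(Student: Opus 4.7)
The plan is to exploit the defining optimality of greedy sequences and reduce the conjecture to a comparison between potentials generated by an arbitrary configuration and those generated by greedy ones. First, for any sequence $(x_n)\subset S^1$, one has the telescoping identity
\[
E_s(\omega_N) \;=\; 2\sum_{n=1}^{N-1} U_n(x_n),\qquad U_n(z):=\sum_{i=0}^{n-1} k_s(z,x_i),
\]
where $k_s$ is the Riesz ($s>0$) or logarithmic ($s=0$) kernel. By the definition of greedy sequences, $U_n(x_n)\geq \inf_{z\in S^1} U_n(z)$, so the question becomes whether, even when each $x_n$ is chosen to minimize $U_n$, the values $\inf_z U_n(z)$ accumulate infinitely often in a way that still forces the stated limsup.

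Next I would study how $\inf_z U_n(z)$ depends on the discrepancy of $\omega_n$ from an equally-spaced $n$-point configuration. The expectation is that if $\omega_n$ is close to such a set, then $\inf_z U_n(z)$ is close to the value produced by a greedy section of the same size, while any quantitative departure from equidistribution forces an extra positive contribution, bounded below in terms of this discrepancy. This should yield an inequality of the schematic form $E_s(\omega_N)\geq \mathcal{L}_s(N)+R_s(\omega_N)$, where $R_s$ measures the cumulative deviation of the configurations $(\omega_n)_{n\leq N}$ from equally-spaced sets.

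The third step would be to chase bad dyadic scales. The conjectured lower bounds are attained by greedy sequences along subsequences $N=2^{n_1}+\cdots+2^{n_p}$ with $(2^{n_i}/N)\to\vec\theta\in\Theta_p$ extremizing the functionals $H(\vec\theta;s)$ or $K(\vec\theta)$. Using a pigeonhole or averaging argument over the $\log_2 N$ dyadic scales available up to a given $N$, one would try to show that for any infinite sequence $(x_n)$ there must exist a subsequence of integers along which the cumulative discrepancy penalty attains the conjectured amount, and that the extremizing $\vec\theta$ can be realized at a subsequence of such scales.

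The main obstacle is the absence, for a general sequence, of the rigid combinatorial decomposition described in items 1)--3) of Section~\ref{section:intro}, which is precisely what makes the greedy analysis tractable. Without such a structure one cannot reduce the limsup to a finite-dimensional optimization over $\Theta_p$ as in Definitions~\ref{def:Thetap}--\ref{def:functionsh}. A promising but delicate alternative is a contradiction argument: were the limsup strictly below the conjectured bound, one should be able to extract, by compactness and diagonalization on the configurations $\omega_N$, a subsequence whose joint asymptotic statistics violate the exact second-order asymptotics of Leja sequences established in Theorems~\ref{theo:secorderasympRieszenergy:1}--\ref{theo:criticalcase}. However, converting such a hypothetical sequence into one comparable to a bona fide greedy sequence requires controlling the fine-scale recursive structure of $\omega_N$, and this step seems to be exactly where the difficulty of the conjecture is concentrated.
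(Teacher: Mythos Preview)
The statement you are attempting to prove is not proved in the paper: it is explicitly labeled a \emph{Conjecture} and left open, so there is no proof in the paper to compare your proposal against. The authors motivate it only by the greedy optimality property and by the exact limsup values they compute for Leja/greedy sequences in Theorems~\ref{theo:secondorderest}, \ref{theo:secorderasympRieszenergy:1}, \ref{theo:firstorderasympRieszenergy}, and \ref{theo:criticalcase}.

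Your outline is honest about its own status: the first two steps (the telescoping identity and the trivial bound $U_n(x_n)\ge\inf_z U_n(z)$) are correct but by themselves only recover $E_s(\omega_N)\ge \mathcal{L}_s(N)$, which gives the liminf statements, not the conjectured limsup lower bounds. The crucial third step --- forcing, for an \emph{arbitrary} sequence, a subsequence of $N$'s along which the normalized energy climbs at least to the value attained by greedy sequences at the extremizing $\vec\theta$ --- is precisely the content of the conjecture, and you correctly identify that the rigid dyadic structure of Leja sections (items 1)--3) in Section~\ref{section:intro}) is unavailable in general. Your proposed contradiction/compactness route does not bridge this gap: extracting a weak-$*$ limit of $\omega_N$ loses exactly the second-order information the conjecture concerns, and there is no mechanism offered for why a hypothetical sequence beating the greedy limsup would have to look like a greedy sequence at any scale. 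In short, your proposal is a reasonable discussion of where the difficulty lies, but it is not a proof, and the paper does not claim one either.
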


\section{First-order asymptotics in the logarithmic case}\label{section:firstorderasymp}

In this section we prove Theorem~\ref{theo:firstorderasymp}, but first we give a preliminary discussion and prove an auxiliary result.

Let $(a_{n})_{n=0}^{\infty}$ be a Leja sequence on $S^{1}$. It was shown in \cite[Lemma 4]{CalManh} that if $k=2^{n_{1}}+2^{n_{2}}+\cdots+2^{n_{t}}$ with $n_{1}>n_{2}>\cdots> n_{t}\geq 0$, then
\begin{equation}\label{eq:minprod}
\prod_{i=0}^{k-1}|a_{k}-a_{i}|=2^{t}.
\end{equation}
Let $U_{k}$ denote the discrete potential
\[
U_{k}(z):=\sum_{i=0}^{k-1}\log\frac{1}{|z-a_{i}|}.
\]
Then
\[
E_{0}(\alpha_{N})=2\sum_{0\leq i<k\leq N-1}\log\frac{1}{|a_{i}-a_{k}|}=2\sum_{k=1}^{N-1}\sum_{i=0}^{k-1}\log\frac{1}{|a_{k}-a_{i}|}=2\sum_{k=1}^{N-1}U_{k}(a_{k}).
\]
If $\tau(k)$ is the integer with the property
\begin{equation}\label{def:tau}
k=2^{n_{1}}+2^{n_{2}}+\cdots+2^{n_{\tau(k)}},\qquad n_{1}>n_{2}>\cdots> n_{\tau(k)}\geq 0,
\end{equation}
then according to \eqref{eq:minprod},
\[
U_{k}(a_{k})=-\log\, (2^{\tau(k)}),
\]
and therefore
\begin{equation}\label{energy}
E_{0}(\alpha_{N})=-2\,\log(2)\,\sum_{k=1}^{N-1}\tau(k).
\end{equation}

Note that $\tau(k)$ is the number of ones in the binary representation of $k$, so it satisfies the following properties:
\[
\tau(2^{n})=1,\qquad n\geq 0,
\]
and if $n_{1}>n_{2}>\cdots>n_{k}$, then
\begin{equation}\label{property}
\tau(2^{n_{1}}+2^{n_{2}}+\cdots+2^{n_{k}}+m)=k+\tau(m),\qquad 1\leq m\leq 2^{n_{k}}-1.
\end{equation}

Recall that the logarithmic energy of the configuration formed by $N$ equally spaced points in $S^{1}$ equals $-N\log N$. Since the configuration $\alpha_{2^{n}}$ consists of $2^{n}$ equally spaced points, we have
\begin{equation}\label{energyequalspace}
E_{0}(\alpha_{2^{n}})=-2^{n}\log(2^{n}).
\end{equation}
In particular, \eqref{energy} and \eqref{energyequalspace} give
\begin{equation}\label{sum}
n 2^{n-1}=\sum_{k=1}^{2^{n}-1}\tau(k).
\end{equation}

More generally, we have the following.

\begin{lemma}\label{lemmasum}
Assume that
\begin{equation}\label{eq:bin:N}
N=2^{n_{1}}+2^{n_{2}}+\cdots+2^{n_{t}},\qquad n_{1}>n_{2}>\cdots>n_{t}\geq 0.
\end{equation}
Then
\begin{equation}\label{sum:sk}
\sum_{k=1}^{N-1}\tau(k)=\sum_{i=1}^{t}\left(n_{i}+2(i-1)\right)\,2^{n_{i}-1}.
\end{equation}
\end{lemma}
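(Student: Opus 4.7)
The plan is to prove \eqref{sum:sk} by partitioning the sum on the left-hand side into blocks that match the binary expansion \eqref{eq:bin:N} of $N$, and then evaluating each block using property \eqref{property} and the closed form \eqref{sum}.

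Concretely, I would define the partial sums $N_{i}:=2^{n_{1}}+2^{n_{2}}+\cdots+2^{n_{i}}$ for $i=1,\ldots,t$, so that $N_{t}=N$, and split
\[
\sum_{k=1}^{N-1}\tau(k)=\sum_{k=1}^{N_{1}-1}\tau(k)+\sum_{i=2}^{t}\sum_{k=N_{i-1}}^{N_{i}-1}\tau(k).
\]
The first piece equals $n_{1}\,2^{n_{1}-1}$ by \eqref{sum}, which is exactly the $i=1$ term on the right-hand side of \eqref{sum:sk}.

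For the $i$-th inner sum with $i\geq 2$, I would write each $k$ in that range as $k=N_{i-1}+m$ with $0\leq m\leq 2^{n_{i}}-1$. Since $2^{n_{i}}-1<2^{n_{i-1}}$, iterating \eqref{property} shows that $\tau(N_{i-1}+m)=(i-1)+\tau(m)$ for $1\leq m\leq 2^{n_{i}}-1$, while the boundary case $m=0$ gives $\tau(N_{i-1})=i-1$. Therefore
\[
\sum_{k=N_{i-1}}^{N_{i}-1}\tau(k)=(i-1)\,2^{n_{i}}+\sum_{m=1}^{2^{n_{i}}-1}\tau(m)=(i-1)\,2^{n_{i}}+n_{i}\,2^{n_{i}-1}=\bigl(n_{i}+2(i-1)\bigr)\,2^{n_{i}-1},
\]
where in the second equality I again applied \eqref{sum}. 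Summing over $i=1,\ldots,t$ yields \eqref{sum:sk}.

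There is no serious obstacle: the argument is a bookkeeping decomposition anchored on the iterated use of \eqref{property}. The only point that deserves a brief check is that the formula $(n_{i}+2(i-1))\,2^{n_{i}-1}$ remains correct when $n_{t}=0$ (so the last block consists of the single element $N_{t-1}$ with $\tau(N_{t-1})=t-1$); a direct substitution confirms the identity in this edge case as well.
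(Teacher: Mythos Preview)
Your proposal is correct and is essentially the same argument as the paper's: both decompose the sum $\sum_{k=1}^{N-1}\tau(k)$ into blocks $[N_{i-1},N_i-1]$ and evaluate each block via \eqref{property} and \eqref{sum}, obtaining $(i-1)2^{n_i}+n_i 2^{n_i-1}$. The only cosmetic difference is that the paper phrases this as an induction on $t$ (computing the last block and invoking the inductive hypothesis for the rest), whereas you compute all blocks directly and sum; the underlying computation is identical.
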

\begin{proof}
The proof is by induction on $t$. If $t=1$ then \eqref{sum:sk} is exactly \eqref{sum}. Applying (\ref{property}) and (\ref{sum}) we obtain
\begin{align*}
\sum_{k=2^{n_{1}}+2^{n_{2}}+\cdots+2^{n_{t-1}}}^{2^{n_{1}}+2^{n_{2}}+\cdots+2^{n_{t}}-1} \tau(k) & =\tau(2^{n_{1}}+2^{n_{2}}+\cdots+2^{n_{t-1}})
+\sum_{m=1}^{2^{n_{t}}-1}\tau(2^{n_{1}}+2^{n_{2}}+\cdots+2^{n_{t-1}}+m)\\
 & =t-1+\sum_{m=1}^{2^{n_{t}}-1}(\tau(m)+t-1)=(t-1)\, 2^{n_{t}}+n_{t}\, 2^{n_{t}-1}.
\end{align*}
So \eqref{sum:sk} now follows easily by induction applying the previous computations and 
\[
\sum_{k=1}^{2^{n_{1}}+2^{n_{2}}+\cdots+2^{n_{t}}-1}\tau(k)
=\sum_{k=1}^{2^{n_{1}}+2^{n_{2}}+\cdots+2^{n_{t-1}}-1}\tau(k)
+\sum_{k=2^{n_{1}}+2^{n_{2}}+\cdots+2^{n_{t-1}}}^{2^{n_{1}}+2^{n_{2}}+\cdots+2^{n_{t}}-1}\tau(k).
\]
\end{proof}

\noindent \emph{Proof of Theorem~$\ref{theo:firstorderasymp}$.} From \eqref{sum:sk} and (\ref{energy}) it follows that if $N=2^{n_{1}}+2^{n_{2}}+\cdots+2^{n_{\tau(N)}}$, with $n_{1}>n_{2}>\cdots>n_{\tau(N)}\geq 0,$ then
\begin{equation}\label{energydecomp}
E_{0}(\alpha_{N})=\Lambda_{N,1}+\Lambda_{N,2},
\end{equation}
where
\begin{align*}
\Lambda_{N,1} & :=-\log(2)(n_{1} 2^{n_{1}}+n_{2} 2^{n_{2}}+\cdots+n_{\tau(N)} 2^{n_{\tau(N)}}),\\
\Lambda_{N,2} & :=-\log(2)(2^{n_{2}+1}+2\cdot 2^{n_{3}+1}+\cdots+(\tau(N)-1) 2^{n_{\tau(N)}+1}).
\end{align*}
We first justify that
\begin{equation}\label{limit1}
\lim_{N\rightarrow\infty}\frac{\Lambda_{N,2}}{N\log N}=0.
\end{equation}
Indeed, we have
\begin{gather*}
-\frac{1}{2\log(2)}\frac{\Lambda_{N,2}}{N\log N}=\frac{2^{n_{2}}+2\cdot 2^{n_{3}}+\cdots+(\tau(N)-1)2^{n_{\tau(N)}}}{(2^{n_{1}}+2^{n_{2}}+\cdots+2^{n_{\tau(N)}})\log N}\\
=\frac{2^{n_{2}-n_{1}}+2\cdot 2^{n_{3}-n_{1}}+\cdots+(\tau(N)-1)2^{n_{\tau(N)}-n_{1}}}{(1+2^{n_{2}-n_{1}}+\cdots+2^{n_{\tau(N)}-n_{1}})\log N}\\
\leq \frac{\frac{1}{2}+2\big(\frac{1}{2}\big)^{2}+\cdots+(\tau(N)-1)\big(\frac{1}{2}\big)^{\tau(N)-1}}{\log N}.
\end{gather*}
The numerator in the last expression is bounded by $\sum_{n=1}^{\infty}n 2^{-n}=2$ and (\ref{limit1}) follows.

We now show that
\begin{equation}\label{limit2}
\lim_{N\rightarrow\infty}\frac{\Lambda_{N,1}}{N\log N}=-1,
\end{equation}
hence \eqref{firstorderlimit} will follow from \eqref{energydecomp}, \eqref{limit1} and \eqref{limit2}.
We write
\begin{gather}
-\frac{\Lambda_{N,1}}{N\log N}=\frac{\log(2)(n_{1} 2^{n_{1}}+n_{2} 2^{n_{2}}+\cdots+n_{\tau(N)} 2^{n_{\tau(N)}})}{(2^{n_{1}}+2^{n_{2}}+\cdots+2^{n_{\tau(N)}})\log (2^{n_{1}}+2^{n_{2}}+\cdots+2^{n_{\tau(N)}})}\notag\\
=\frac{\log(2)(n_{1}+n_{2} 2^{n_{2}-n_{1}}+\cdots+n_{\tau(N)} 2^{n_{\tau(N)}-n_{1}})}{(1+2^{n_{2}-n_{1}}+\cdots+2^{n_{\tau(N)}-n_{1}})\log (2^{n_{1}}(1+2^{n_{2}-n_{1}}+\cdots+2^{n_{\tau(N)}-n_{1}}))}\label{expression:Lambdan1}\\
=\frac{\log(2)(n_{1}+n_{2} 2^{n_{2}-n_{1}}+\cdots+n_{\tau(N)} 2^{n_{\tau(N)}-n_{1}})}{(1+2^{n_{2}-n_{1}}+\cdots+2^{n_{\tau(N)}-n_{1}})\{n_{1}\log 2+\log (1+2^{n_{2}-n_{1}}+\cdots+2^{n_{\tau(N)}-n_{1}})\}}.\notag
\end{gather}
Since $n_{1}>n_{2}>\cdots>n_{\tau(N)}$, we have $1+2^{n_{2}-n_{1}}+\cdots+2^{n_{\tau(N)}-n_{1}}< \sum_{m=0}^{\infty}2^{-m}=2$. Therefore
\begin{equation}\label{normlimit}
\lim_{N\rightarrow\infty}\frac{(1+2^{n_{2}-n_{1}}+\cdots+2^{n_{\tau(N)}-n_{1}})\log(1+2^{n_{2}-n_{1}}+\cdots+2^{n_{\tau(N)}-n_{1}})}{ n_{1}+n_{2} 2^{n_{2}-n_{1}}+\cdots+n_{\tau(N)} 2^{n_{\tau(N)}-n_{1}}}=0,
\end{equation}
due to the fact that $n_{1}\rightarrow\infty$ as $N\rightarrow\infty$. 

Now we write
\begin{gather}
\frac{n_{1}\log (2)\, (1+2^{n_{2}-n_{1}}+\cdots+2^{n_{\tau(N)}-n_{1}})}{\log (2)\, (n_{1}+n_{2} 2^{n_{2}-n_{1}}+\cdots+n_{\tau(N)} 2^{n_{\tau(N)}-n_{1}})}\notag\\
=\frac{1+2^{n_{2}-n_{1}}+\cdots+2^{n_{\tau(N)}-n_{1}}}{1+\big(\frac{n_{2}}{n_{1}}\big)\,2^{n_{2}-n_{1}}+\cdots+\big(\frac{n_{\tau(N)}}{n_{1}}\big)\,2^{n_{\tau(N)}-n_{1}}}=:\frac{c_{N}}{d_{N}}.\label{def:anbn}
\end{gather}
In order to prove that $c_{N}/d_{N}\rightarrow 1$ it suffices to show that $c_{N}-d_{N}\rightarrow 0$. We have
\[
c_{N}-d_{N}=\left(1-\frac{n_{2}}{n_{1}}\right)\,2^{n_{2}-n_{1}}+\left(1-\frac{n_{3}}{n_{1}}\right)\,2^{n_{3}-n_{1}}
+\cdots+\left(1-\frac{n_{\tau(N)}}{n_{1}}\right)\,2^{n_{\tau(N)}-n_{1}}.
\]
One can prove that this expression approaches zero applying Lebesgue's dominated convergence theorem. On $\mathbb{N}=\{1,2,\ldots\}$ we define a sequence of functions $(f_{N})_{N}$ as follows:
\[
f_{N}(m)=\begin{cases}
(1-\frac{n_{i}}{n_{1}})\, 2^{n_{i}-n_{1}} & \mbox{if}\,\,m=n_{1}-n_{i}\,\,\mbox{for some $i$, $2\leq i\leq \tau(N)$},\\
0 & \mbox{otherwise}.
\end{cases}
\]
The function $f_{N}$ is well-defined, and clearly
\[
c_{N}-d_{N}=\sum_{m=1}^{\infty} f_{N}(m).
\]
For each fixed $m$, $f_{N}(m)=0$ or $f_{N}(m)=(m/n_{1})\, 2^{-m}$. In any case, since $n_{1}\rightarrow\infty$ we have $f_{N}(m)\rightarrow 0$ as $N\rightarrow\infty$. Hence Lebesgue's theorem gives
\[
\lim_{N\rightarrow\infty} c_{N}-d_{N}=\sum_{m=1}^{\infty} 0=0.
\]
Since $c_{N}/d_{N}\rightarrow 1$, \eqref{limit2} follows from \eqref{expression:Lambdan1}, \eqref{normlimit} and \eqref{def:anbn}. \hfill $\Box$

\begin{remark}
We would like to emphasize that if $(x_{n})_{n=0}^{\infty}$ is any sequence of pairwise distinct points in $S^{1}$, then it is clear that for the sequence of configurations $\omega_{N}=\{x_{0},x_{1},\ldots,x_{N-1}\}$ we have
\begin{equation}\label{ulgeneralseq}
\limsup_{N\rightarrow\infty}\frac{E_{0}(\omega_{N})}{N\log N}\geq -1.
\end{equation}
If we have equality in \eqref{ulgeneralseq}, then
\begin{equation}\label{lgeneralseq}
\lim_{N\rightarrow\infty}\frac{E_{0}(\omega_{N})}{N\log N}=-1
\end{equation}
and the sequence $(x_{n})_{n=0}^{\infty}$ will be asymptotically uniformly distributed; that is, we have the weak-star convergence
\begin{equation}\label{weakstarconvunif}
\frac{1}{N}\sum_{x\in\omega_{N}}\delta_{x}\xrightarrow[N\rightarrow\infty]{*}\sigma, 
\end{equation}
where $\sigma$ denotes the normalized arc length measure on $S^{1}$. Indeed, if \eqref{lgeneralseq} holds, then
\[
\lim_{N\rightarrow\infty}\frac{E_{0}(\omega_{N})}{N^{2}}=0=\iint\log\frac{1}{|x-y|}\,d\sigma(x)\,d\sigma(y),
\]
and this implies \eqref{weakstarconvunif} by a standard argument in potential theory, see \cite{Ran}.
\end{remark}

\section{Second-order estimates in the logarithmic case}\label{section:secondorderest}

\noindent\emph{Proof of Theorem~$\ref{theo:secondorderest}$.} The inequality on the left-hand side of \eqref{secondorderest} is obvious. If $N$ has the binary representation \eqref{eq:bin:N}, then in virtue of \eqref{energy} and \eqref{sum:sk} we have
\begin{equation}\label{exp:modenergy}
\frac{E_{0}(\alpha_{N})+N\log N}{N}=-\log (2)\,\frac{\sum_{i=1}^{t}(n_{i}+2i-2)\,2^{n_{i}}}{\sum_{i=1}^{t}2^{n_{i}}}+\log\left(\sum_{i=1}^{t}2^{n_{i}}\right),
\end{equation}
hence the inequality on the right-hand side of \eqref{secondorderest} is the same as
\[
-\log (2)\,\frac{\sum_{i=1}^{t}(n_{i}+2i-2)\,2^{n_{i}}}{\sum_{i=1}^{t}2^{n_{i}}}+\log\left(\sum_{i=1}^{t}2^{n_{i}}\right)<2\log 2-\log 3.
\]
Simplifying we obtain that this is equivalent to
\[
\log\left(3\sum_{i=1}^{t}2^{n_{i}}\right)<\log(2)\,\frac{\sum_{i=1}^{t}(n_{i}+2i) 2^{n_{i}}}{\sum_{i=1}^{t}2^{n_{i}}}.
\]
Hence we want to show that
\begin{equation}\label{ineqsecondorder}
3\sum_{i=1}^{t} 2^{n_{i}}<2^{c_{N}},\qquad c_{N}=\frac{\sum_{i=1}^{t}(n_{i}+2i)\, 2^{n_{i}}}{\sum_{i=1}^{t}2^{n_{i}}}.
\end{equation}
In order to prove \eqref{ineqsecondorder} we apply the following inequality, which can be found in \cite[page 78]{HLP}: For any collection of positive numbers $(b_{i})_{i=1}^{t}$ and $(p_{i})_{i=1}^{t}$ we have
\begin{equation}\label{ineqHLP}
\frac{\sum_{i=1}^{t}p_{i}\,b_{i}}{\sum_{i=1}^{t}p_{i}}\leq \exp\left(\frac{\sum_{i=1}^{t}p_{i}\,b_{i}\,\log(b_{i})}{\sum_{i=1}^{t}p_{i}\,b_{i}}\right),
\end{equation}
with equality only if all the $b$'s are equal. The inequality \eqref{ineqHLP} is obtained applying Jensen's inequality to the convex function $x\log x$. Taking in \eqref{ineqHLP} the values
\[
p_{i}=2^{-2i},\qquad b_{i}=2^{n_{i}+2i},\qquad i=1,\ldots,t,
\]
we obtain after simplification the expression
\[
\frac{\sum_{i=1}^{t}2^{n_{i}}}{\sum_{i=1}^{t} 4^{-i}}<2^{c_{N}},
\]
which gives \eqref{ineqsecondorder}.

In order to prove \eqref{ulsecondorder}, it suffices now to show that for the subsequence
\begin{equation}\label{choiceN}
N=N(k)=\sum_{j=0}^{k} 4^{j}=\frac{4^{k+1}-1}{3}
\end{equation}
one gets
\[
\lim_{k\rightarrow\infty}\frac{E_{0}(\alpha_{N})+N\log N}{N}=\log\left(\frac{4}{3}\right).
\]
For this it is convenient to rewrite \eqref{exp:modenergy} as
\begin{equation}\label{modenergyrewrite}
\frac{E_{0}(\alpha_{N})+N\log N}{N}=\log(2)\,\frac{\sum_{i=2}^{t}(n_{1}-n_{i}+2-2i)\,2^{n_{i}-n_{1}}}{\sum_{i=1}^{t}2^{n_{i}-n_{1}}}+\log\left(\sum_{i=1}^{t}2^{n_{i}-n_{1}}\right).
\end{equation}
For the choice \eqref{choiceN} of $N$ we have $n_{1}-n_{i}=2i-2$, hence the first term vanishes, while the second term approaches $\log(4/3)$.\hfill $\Box$

\bigskip

An interesting property of the sequence analyzed in Theorem \ref{theo:secondorderest} is the fact that 
\[
\frac{E_{0}(\alpha_{N})+N\log N}{N}=\frac{E_{0}(\alpha_{2N})+2N\log (2N)}{2N},\qquad \mbox{for all}\,\,N\geq 1,
\]
which can be easily checked using \eqref{modenergyrewrite}. This property explains the ``periodic" behavior of the sequence $\left(\frac{E_{0}(\alpha_{N})+N\log N}{N}\right)_{N}$ that can be observed in Fig.~\ref{fig:energylog} below, with increasing ``periods" of length $2^{n}$.

\begin{figure}[h]
\centering
\includegraphics[totalheight=3in,keepaspectratio]{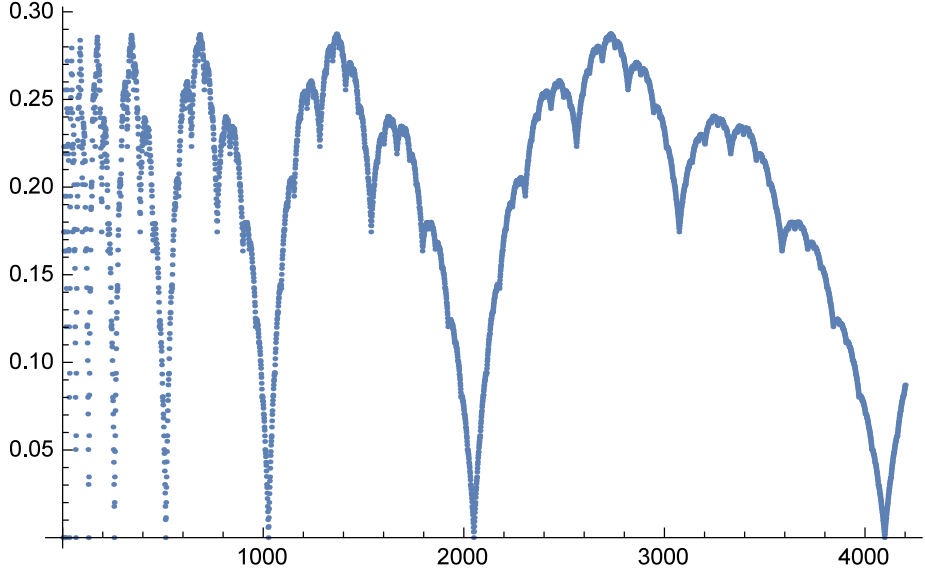}
\caption{This figure shows the first $4200$ points of the sequence $\left(E_{0}(\alpha_{N})+N\log N)/N\right)_{N}$.}
\label{fig:energylog}
\end{figure}

\section{Proofs of results in the Riesz setting}\label{section:RieszenergyLeja}

We begin with a formula that expresses the Riesz $s$-energy of the first $N$ points $\alpha_{N}$ of a greedy $s$-energy sequence on the unit circle in terms of the binary representation of $N$.  

\begin{proposition}\label{prop:Reiszenergybinary}
Let $(a_{n})_{n=0}^{\infty}$ be a greedy $s$-energy sequence on $S^{1}$, and let $\alpha_{N}=(a_{n})_{n=0}^{N-1}$. Assume that $N$ has the binary representation \eqref{eq:bin:N}. Then
\begin{equation}\label{Rieszenergybinary}
E_{s}(\alpha_{N})=\sum_{k=1}^{t-1}\left(\sum_{j=k+1}^{t} 2^{n_{j}-n_{k}}\right)\mathcal{L}_{s}(2^{n_{k}+1})+\sum_{k=1}^{t}\left(1-\sum_{j=k+1}^{t} 2^{n_{j}-n_{k}+1}\right)\mathcal{L}_{s}(2^{n_{k}}),
\end{equation}
understanding $\sum_{t+1}^{t}$ as empty sum.
\end{proposition}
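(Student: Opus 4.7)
\noindent\emph{Proof proposal for Proposition~\ref{prop:Reiszenergybinary}.}

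The plan is to exploit the block decomposition of Leja sections supplied by property 3) of the introduction. Writing $\zeta_{1}:=1$ and $\zeta_{k}:=\rho_{1}\rho_{2}\cdots\rho_{k-1}$ for $k\geq 2$, properties 1) and 3) let us represent
\[
\alpha_{N}=\bigl(A_{1},\,\zeta_{2}A_{2},\,\ldots,\,\zeta_{t}A_{t}\bigr),
\]
where each $\rho_{i}$ is a $2^{n_{i}}$th root of $-1$ and each $A_{k}$ is the full set of $2^{n_{k}}$th roots of unity. Setting $B_{k}:=\zeta_{k}A_{k}$, the rotation invariance $|\zeta_{k}|=1$ gives $E_{s}(B_{k})=\mathcal{L}_{s}(2^{n_{k}})$, so I split
\[
E_{s}(\alpha_{N})=\sum_{k=1}^{t}\mathcal{L}_{s}(2^{n_{k}})+2\sum_{1\leq k<j\leq t}\Sigma_{k,j},\qquad \Sigma_{k,j}:=\sum_{a\in A_{k},\,b\in\omega_{k,j}A_{j}}|a-b|^{-s},
\]
where $\omega_{k,j}:=\zeta_{k}^{-1}\zeta_{j}=\rho_{k}\rho_{k+1}\cdots\rho_{j-1}$.

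The key algebraic observation is that $\omega_{k,j}$ is itself a $2^{n_{k}}$th root of $-1$. Indeed $\rho_{k}^{2^{n_{k}}}=-1$, while for each $i>k$ one has
\[
\rho_{i}^{2^{n_{k}}}=\bigl(\rho_{i}^{2^{n_{i}}}\bigr)^{2^{n_{k}-n_{i}}}=(-1)^{2^{n_{k}-n_{i}}}=1,
\]
since $n_{k}-n_{i}\geq 1$; multiplying these gives $\omega_{k,j}^{2^{n_{k}}}=-1$. Combined with $A_{j}\subset A_{k}$ (because $n_{j}<n_{k}$ implies $2^{n_{j}}\mid 2^{n_{k}}$), this forces $\omega_{k,j}A_{j}$ to be a $2^{n_{j}}$-point subset of the $2^{n_{k}}$th roots of $-1$. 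The crucial symmetry step is then: for any $y$ lying among the $2^{n_{k}}$th roots of $-1$, the potential $\sum_{a\in A_{k}}|a-y|^{-s}$ depends only on $n_{k}$ and not on $y$, because any two such $y_{1},y_{2}$ differ by a $2^{n_{k}}$th root of unity $\eta\in A_{k}$ and the map $a\mapsto \eta^{-1}a$ permutes $A_{k}$ while preserving distances. Denoting this common value by $V_{k}$, we obtain $\Sigma_{k,j}=2^{n_{j}}V_{k}$.

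To identify $V_{k}$, apply the same decomposition to the two-block union $A_{k}\cup\rho_{k}A_{k}$, which coincides with the set of $2^{n_{k}+1}$th roots of unity. This gives
\[
\mathcal{L}_{s}(2^{n_{k}+1})=2\mathcal{L}_{s}(2^{n_{k}})+2\cdot 2^{n_{k}}V_{k},
\]
so $V_{k}=\bigl(\mathcal{L}_{s}(2^{n_{k}+1})-2\mathcal{L}_{s}(2^{n_{k}})\bigr)/2^{n_{k}+1}$. Plugging this into $2\Sigma_{k,j}=2^{n_{j}-n_{k}}\mathcal{L}_{s}(2^{n_{k}+1})-2^{n_{j}-n_{k}+1}\mathcal{L}_{s}(2^{n_{k}})$, summing over $1\leq k<j\leq t$, and collecting the coefficients of $\mathcal{L}_{s}(2^{n_{k}+1})$ and $\mathcal{L}_{s}(2^{n_{k}})$ yields \eqref{Rieszenergybinary} exactly. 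The main obstacle I anticipate is the verification that every $\omega_{k,j}$ lands in the $2^{n_{k}}$th roots of $-1$; once this and the rotational symmetry of $A_{k}$ are in place, the entire computation collapses to the single scalar $V_{k}$, and the rest is routine bookkeeping.
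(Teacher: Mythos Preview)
Your argument is correct. The core mechanism is the same as the paper's---both proofs rest on the fact that every point of a later block sits among the $2^{n_{k}}$th roots of $-1$ relative to an earlier block $A_{k}$, so the inner potential $\sum_{a\in A_{k}}|a-y|^{-s}$ is constant and equals $\tfrac{1}{2^{n_{k}+1}}\mathcal{L}_{s}(2^{n_{k}+1})-\tfrac{1}{2^{n_{k}}}\mathcal{L}_{s}(2^{n_{k}})$. The organizational difference is that the paper proceeds recursively: it peels off the largest block $A_{1}$, computes its interaction with the remainder $B_{1}$ in one stroke, and then invokes the Leja structure of $B_{1}$ to repeat. You instead expand all $\binom{t}{2}$ block interactions $\Sigma_{k,j}$ at once and reduce each to the scalar $V_{k}$. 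Your explicit verification that $\omega_{k,j}^{2^{n_{k}}}=-1$ is a nice touch---the paper states the midpoint property geometrically without spelling out why it persists for \emph{every} pair $k<j$, whereas your computation $\rho_{i}^{2^{n_{k}}}=(-1)^{2^{n_{k}-n_{i}}}=1$ for $i>k$ makes this transparent. The direct approach avoids the inductive bookkeeping and is marginally cleaner; the recursive approach mirrors more closely how Leja sections are actually built.
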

\begin{proof}
The proof of \eqref{Rieszenergybinary} is obtained from a repeated application of the following simple property. If $A$ and $B$ are two finite sets of points on the unit circle with $A\cap B=\emptyset$, then
\begin{equation}\label{Rieszdecompformula}
E_{s}(A\cup B)=E_{s}(A)+E_{s}(B)+2\sum_{y\in B}\sum_{x\in A}|x-y|^{-s}.
\end{equation}
Let $(a_{n})_{n=0}^\infty$ be a greedy $s$-energy sequence on $S^{1}$. Recall that this sequence also has the structure of a Leja sequence, and hence the properties $1)$--$3)$ described in Sec.~\ref{section:intro} are also applicable for this sequence. Let $A_{1}$ denote the set formed by the first $2^{n_{1}}$ points of the sequence and $B_{1}$ denote the next $N-2^{n_{1}}=2^{n_{2}}+\cdots+2^{n_{t}}$ points of the sequence, i.e., 
\[
A_{1}:=(a_{n})_{n=0}^{2^{n_{1}-1}},\qquad B_{1}:=(a_{n})_{n=2^{n_{1}}}^{N-1}.
\]
Since the points in $A_{1}$ are equally spaced, we have $E_{s}(A_{1})=\mathcal{L}_{s}(2^{n_{1}})$. Any $y\in B_{1}$ lies in the midpoint of one of the $2^{n_{1}}$ arcs determined by the points of $A_{1}$. So clearly $\sum_{x\in A_{1}}|x-y|^{-s}$ is independent of $y$, and we can write this expression as the difference
\begin{align*}
\sum_{x\in A_{1}}|x-y|^{-s}= & 2^{-s}\sum_{j=1}^{2^{n_{1}+1}-1}\left(\sin \frac{\pi j}{2^{n_{1}+1}}\right)^{-s}-2^{-s}\sum_{j=1}^{2^{n_{1}}-1}\left(\sin \frac{\pi j}{2^{n_{1}}}\right)^{-s}\\
= & \frac{1}{2^{n_{1}+1}}\,\mathcal{L}_{s}(2^{n_{1}+1})-\frac{1}{2^{n_{1}}}\,\mathcal{L}_{s}(2^{n_{1}}).
\end{align*}
We conclude from \eqref{Rieszdecompformula} and the computation above that 
\[
E_{s}(\alpha_{N})=\left(\sum_{j=2}^{t} 2^{n_{j}-n_{1}}\right)\mathcal{L}_{s}(2^{n_{1}+1})+\left(1-\sum_{j=2}^{t}2^{n_{j}-n_{1}+1}\right)\mathcal{L}_{s}(2^{n_{1}})+E_{s}(B_{1}).
\]

Now we can apply this argument to the set $B_{1}$, since this set itself has the structure of the first $N-2^{n_{1}}$ points of a greedy sequence, see \cite[Theorem 5]{BiaCal} and \cite[Lemma 4.2]{LopSaff}. In particular, if we make the partition $B_{1}=A_{2}\cup B_{2}$, where $A_{2}$ is the set formed by the first $2^{n_{2}}$ points in $B_{1}$ and $B_{2}$ is the set formed by the remaining $N-2^{n_{1}}-2^{n_{2}}=2^{n_{3}}+\cdots+2^{n_{t}}$ points in $B_{1}$, then again we have that $A_{2}$ is formed by equally spaced points and any point of $B_{2}$ lies in the midpoint of one of the $2^{n_{2}}$ arcs determined by the points of $A_{2}$. Hence as before we get
\[
E_{s}(B_{1})=\left(\sum_{j=3}^{t} 2^{n_{j}-n_{2}}\right)\mathcal{L}_{s}(2^{n_{2}+1})+\left(1-\sum_{j=3}^{t}2^{n_{j}-n_{2}+1}\right)\mathcal{L}_{s}(2^{n_{2}})+E_{s}(B_{2}),
\]
and so
\[
E_{s}(\alpha_{N})=\sum_{k=1}^{2}\left(\sum_{j=k+1}^{t} 2^{n_{j}-n_{k}}\right)\mathcal{L}_{s}(2^{n_{k}+1})+\sum_{k=1}^{2}\left(1-\sum_{j=k+1}^{t} 2^{n_{j}-n_{k}+1}\right)\mathcal{L}_{s}(2^{n_{k}})+E_{s}(B_{2}).
\]
Applying this argument repeatedly it is clear that we arrive at \eqref{Rieszenergybinary}. 
\end{proof}

Before giving the proofs of the results in the Riesz setting, we make some remarks concerning the sets $\Theta_{p}$ and the functions $H$ defined in \eqref{def:H}. 

The reader can easily check that an alternative way to define the set $\Theta_{p}$ is the following. This set consists of all vectors $\vec{\theta}=(\theta_{1},\ldots,\theta_{p})$ that can be written in the form
\begin{equation}\label{def:vectorsinThetap}
\vec{\theta}=\left(\frac{2^{t_{1}}}{M}, \frac{2^{t_{2}}}{M},\ldots, \frac{2^{t_{r-1}}}{M}, \frac{1}{M}, 0,\ldots, 0\right),
\end{equation}
where $M=2^{t_{1}}+2^{t_{2}}+\cdots+2^{t_{r-1}}+1$ is an odd integer with $t_{1}>t_{2}>\cdots>t_{r-1}>0$ and $1\leq r\leq p$. The number of zeros in \eqref{def:vectorsinThetap} is then $p-r$, if they appear. In particular we see that the set $\Theta_{p}$ can be regarded as a subset of $\Theta_{p+1}$, for all $p$. We preferred to give the Definition~\ref{def:Thetap} for $\Theta_{p}$ instead of the one described here since we are only going to make use of the limiting property \eqref{eq:conddefthetavec}.

It follows from \eqref{def:H} that if $\vec{\theta}=(\theta_{1},\ldots,\theta_{p})$ satisfies the condition $\theta_{k}\geq 2 \sum_{j=k+1}^{p}\theta_{j}$ for all $k=1,\ldots,p-1$, then $H(\vec{\theta};s)$ is convex as a function of $s$ since in this case we can write it as a positive linear combination of convex functions.

\subsection{Second-order asymptotics in the Riesz case for $0<s<1$}

Below we will make use of a fortunate relation between the coefficients appearing in \eqref{Rieszenergybinary}, the arguments of $\mathcal{L}_{s}$ in this formula, and $N^{2}$. The reader can easily check that for $N$ as in \eqref{eq:bin:N} we have
\begin{equation}\label{expansionN2}
N^{2}=\sum_{k=1}^{t-1}\left(\sum_{j=k+1}^{t} 2^{n_{j}-n_{k}}\right)\,2^{2(n_{k}+1)}+\sum_{k=1}^{t}\left(1-\sum_{j=k+1}^{t} 2^{n_{j}-n_{k}+1}\right)\,2^{2 n_{k}}.
\end{equation}
So if we introduce the notation
\begin{equation}\label{def:Rs}
\mathcal{R}_{s}(N):=\frac{\mathcal{L}_{s}(N)-I_{s}(\sigma) N^{2}}{N^{1+s}},\qquad 0<s<1,
\end{equation}
it follows from \eqref{Rieszenergybinary}and \eqref{expansionN2} that
\begin{align}
\frac{E_{s}(\alpha_{N})-I_{s}(\sigma) N^{2}}{N^{1+s}}= & \sum_{k=1}^{t-1}\left(\frac{2^{n_{k}+1}}{N}\right)^{1+s}\left(\sum_{j=k+1}^{t}2^{n_{j}-n_{k}}\right)\mathcal{R}_{s}(2^{n_{k}+1})\notag\\
+ & \sum_{k=1}^{t}\left(\frac{2^{n_{k}}}{N}\right)^{1+s}\left(1-\sum_{j=k+1}^{t}2^{n_{j}-n_{k}+1}\right)\mathcal{R}_{s}(2^{n_{k}}).\label{eq:secorderRieszenergy}
\end{align}

See an illustration of the sequence $\left(\frac{E_{s}(\alpha_{N})-I_{s}(\sigma) N^{2}}{N^{1+s}}\right)_{N}$ in Fig.~\ref{fig:energyonehalf} below.

\begin{figure}[h]
\centering
\includegraphics[totalheight=3in,keepaspectratio]{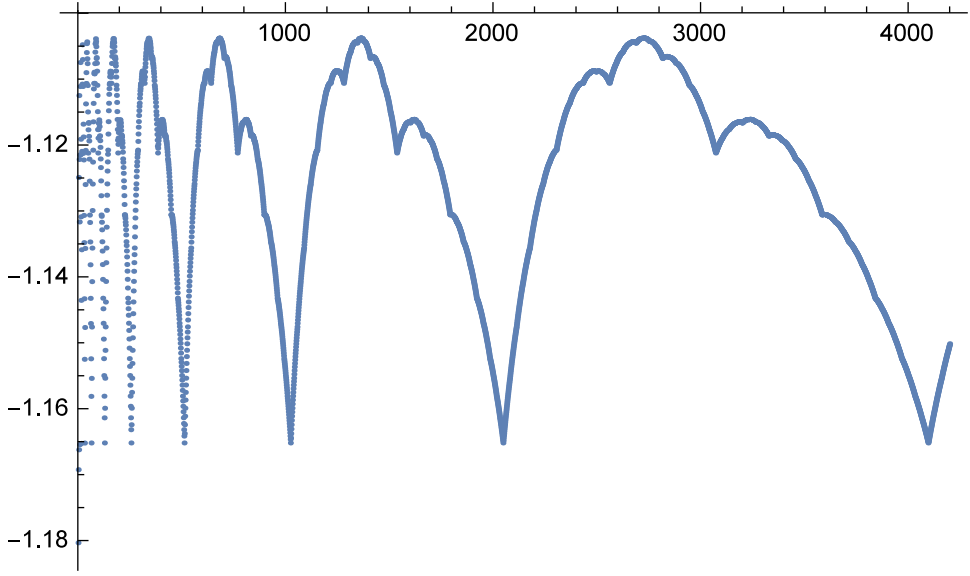}
\caption{This is a plot of the first $4200$ points of the sequence $\left((E_{s}(\alpha_{N})-I_{s}(\sigma) N^{2})/N^{1+s}\right)_{N}$ in the case $s=1/2$.}
\label{fig:energyonehalf}
\end{figure}

\bigskip

\noindent\emph{Proof of Theorem~$\ref{theo:secorderasympRieszenergy:1}$.}
We first prove the inequality ``$\geq$" in \eqref{eq:secorderasympRieszenergy:1}, which is straightforward. Let $p\in\mathbb{N}$ be arbitrary and fix a vector $\vec{\theta}=(\theta_{1},\ldots,\theta_{p})\in\Theta_{p}$. By Definition~\ref{def:Thetap}, there exists an infinite sequence $\mathcal{N}$ of integers of the form $N=2^{n_{1}}+2^{n_{2}}+\cdots+2^{n_{p}}$, $n_{1}>n_{2}>\cdots>n_{p}\geq 0$ such that \eqref{eq:conddefthetavec} holds. Applying \eqref{eq:secorderRieszenergy} we have
\begin{gather*}
\frac{E_{s}(\alpha_{N})-I_{s}(\sigma) N^{2}}{N^{1+s}}\\
=\sum_{k=1}^{p-1}\left(\frac{2^{n_{k}+1}}{N}\right)^{1+s}\left(\sum_{j=k+1}^{p}2^{n_{j}-n_{k}}\right)\mathcal{R}_{s}(2^{n_{k}+1})+\sum_{k=1}^{p}\left(\frac{2^{n_{k}}}{N}\right)^{1+s}\left(1-\sum_{j=k+1}^{p}2^{n_{j}-n_{k}+1}\right)\mathcal{R}_{s}(2^{n_{k}})\\
=\sum_{k=1}^{p-1}\left(\frac{2^{n_{k}+1}}{N}\right)^{s}\left(\sum_{j=k+1}^{p}\frac{2^{n_{j}+1}}{N}\right)\mathcal{R}_{s}(2^{n_{k}+1})+\sum_{k=1}^{p}\left(\frac{2^{n_{k}}}{N}\right)^{s}\left(\frac{2^{n_{k}}}{N}-\sum_{j=k+1}^{p}\frac{2^{n_{j}+1}}{N}\right)\mathcal{R}_{s}(2^{n_{k}}).
\end{gather*}
Using now \eqref{eq:conddefthetavec}, \eqref{def:Rs} and \eqref{eq:secorderasympRieszFekete}, we get
\begin{equation}\label{eq:limitsubseq}
\lim_{N\in\mathcal{N}}\frac{E_{s}(\alpha_{N})-I_{s}(\sigma) N^{2}}{N^{1+s}}
=\left(\sum_{k=1}^{p-1}(2\theta_{k})^{s} \sum_{j=k+1}^{p}2\theta_{j}+\sum_{k=1}^{p}\theta_{k}^{s}\,\Large(\theta_{k}-\sum_{j=k+1}^{p}2\theta_{j}\Large)\right)\frac{2\zeta(s)}{(2\pi)^{s}}.
\end{equation}
Here we have taken into account that if for some particular $k=1,\ldots,p$, the sequence $2^{n_{k}}$ does not approach infinity, then $2^{n_{k}}/N$ approaches $\theta_{k}=0$ and therefore we still have $(2^{n_{k}}/N)^{s}\,\mathcal{R}_{s}(2^{n_{k}})\rightarrow \theta_{k}^{s}\, 2\zeta(s)/(2\pi)^{s}=0$. The first factor on the right-hand side of \eqref{eq:limitsubseq} is exactly $H(\vec{\theta};s)$, and therefore
\[
\limsup_{N\rightarrow\infty}\frac{E_{s}(\alpha_{N})-I_{s}(\sigma) N^{2}}{N^{1+s}}\geq H(\vec{\theta};s)\,\frac{2\zeta(s)}{(2\pi)^{s}}.
\]
Since $p$ and $\vec{\theta}$ were arbitrary, we obtain the desired inequality.

Now we prove the converse inequality in \eqref{eq:secorderasympRieszenergy:1}. Let $\mathcal{N}\subset\mathbb{N}$ be an infinite sequence for which the sequence $\left(\frac{E_{s}(\alpha_{N})-I_{s}(\sigma) N^{2}}{N^{1+s}}\right)_{N\in\mathcal{N}}$ converges, and we shall show that
\begin{equation}\label{eq:ineqtobeproved}
\lim_{N\in\mathcal{N}}\frac{E_{s}(\alpha_{N})-I_{s}(\sigma) N^{2}}{N^{1+s}}\leq \underline{h}(s)\,\frac{2\zeta(s)}{(2\pi)^{s}}.
\end{equation}

Assume first that there exists $p\geq 1$ such that an infinite number of integers $N\in\mathcal{N}$ satisfy the property $\tau(N)=p$, cf. \eqref{def:tau}. Then, taking a subsequence $\widetilde{\mathcal{N}}$ of $\mathcal{N}$ if necessary, such that the integers $N=2^{n_{1}}+2^{n_{2}}+\cdots+2^{n_{p}}\in\widetilde{\mathcal{N}}$ satisfy 
\[
\lim_{N\in\widetilde{N}}\frac{2^{n_{i}}}{N}=\theta_{i},\qquad \mbox{for all}\,\,i=1,\ldots,p,
\]
we get
\[
\lim_{N\in\mathcal{N}}\frac{E_{s}(\alpha_{N})-I_{s}(\sigma) N^{2}}{N^{1+s}}=\lim_{N\in\widetilde{\mathcal{N}}}\frac{E_{s}(\alpha_{N})-I_{s}(\sigma) N^{2}}{N^{1+s}}=H((\theta_{1},\ldots,\theta_{p});s)\,\frac{2\zeta(s)}{(2\pi)^{s}},
\]
and therefore \eqref{eq:ineqtobeproved} holds.

So let us assume now that such an integer $p$ does not exist. This means that we assume now that $\tau(N)\rightarrow\infty$ as $N\rightarrow\infty$ in the sequence $\mathcal{N}$. Let us rewrite, for $N=2^{n_{1}}+2^{n_{2}}+\cdots+2^{n_{\tau(N)}}\in\mathcal{N}$, $n_{1}>n_{2}>\cdots>n_{\tau(N)}\geq 0$, the expression 
\begin{gather}
\frac{E_{s}(\alpha_{N})-I_{s}(\sigma) N^{2}}{N^{1+s}}\notag\\
=\sum_{k=1}^{\tau(N)}\left(\frac{2^{n_{k}}}{N}\right)^{1+s}\left[2^{s+1}\,\mathcal{R}_{s}(2^{n_{k}+1})\,\sum_{j=k+1}^{\tau(N)}2^{n_{j}-n_{k}}+\mathcal{R}_{s}(2^{n_{k}})\left(1-\sum_{j=k+1}^{\tau(N)} 2^{n_{j}-n_{k}+1}\right)\right].\label{eq:rewritingsecorderenergy}
\end{gather}
and let us introduce the notation
\begin{equation}\label{def:lambdaNk}
\lambda_{N,k}:=2^{s+1}\,\mathcal{R}_{s}(2^{n_{k}+1})\,\sum_{j=k+1}^{\tau(N)}2^{n_{j}-n_{k}}+\mathcal{R}_{s}(2^{n_{k}})\left(1-\sum_{j=k+1}^{\tau(N)} 2^{n_{j}-n_{k}+1}\right).
\end{equation}
Since the sequence $(\mathcal{R}_{s}(N))_{N}$ is bounded, it is evident that there exists an absolute constant $C_{1}>0$ independent of $N$, such that
\begin{equation}\label{bound1}
\left|\lambda_{N,k}\right|\leq C_{1},\qquad \mbox{for all}\,\,N\in\mathcal{N}\,\,\mbox{and}\,\,k=1,\ldots,\tau(N).
\end{equation}
On the other hand, we have the following simple estimate for each $N=2^{n_{1}}+\cdots+2^{n_{\tau(N)}}\in\mathcal{N}$,
\begin{equation}\label{bound2}
\frac{2^{n_{k}}}{N}\leq \frac{2^{n_{k}}}{2^{n_{1}}}=2^{n_{k}-n_{1}}\leq 2^{-(k-1)},\qquad k=1,\ldots,\tau(N).
\end{equation}
Now let $0<\epsilon<1$ be fixed. It follows from \eqref{bound2} that there exists $M=M(\epsilon)\in\mathbb{N}$ independent of $N$ such that
\begin{equation}\label{bound4}
\sum_{k=M+1}^{\tau(N)}\frac{2^{n_{k}}}{N}<\epsilon,\qquad \mbox{for all}\,\,N\in\mathcal{N}.
\end{equation}
hence \eqref{bound1} and \eqref{bound4} give 
\begin{equation}\label{bound3}
\sum_{k=M+1}^{\tau(N)}\left(\frac{2^{n_{k}}}{N}\right)^{1+s}|\lambda_{N,k}|<C_{1}\epsilon,\qquad \mbox{for all}\,\,N\in\mathcal{N}.
\end{equation}
Applying \eqref{eq:rewritingsecorderenergy} and \eqref{def:lambdaNk} we can write
\begin{equation}\label{energydecomp:1}
\frac{E_{s}(\alpha_{N})-I_{s}(\sigma) N^{2}}{N^{1+s}}=S_{N,M,1}+S_{N,M,2},
\end{equation}
where
\begin{equation}\label{def:SNM1SNM2}
S_{N,M,1}:=\sum_{k=1}^{M}\left(\frac{2^{n_{k}}}{N}\right)^{1+s}\lambda_{N,k},\qquad
S_{N,M,2}:=\sum_{k=M+1}^{\tau(N)}\left(\frac{2^{n_{k}}}{N}\right)^{1+s}\lambda_{N,k},
\end{equation}
hence by \eqref{bound3} we have
\begin{equation}\label{bound5}
|S_{N,M,2}|\leq C_{1}\epsilon,\qquad \mbox{for all}\,\,N\in\mathcal{N}.
\end{equation}

Now we focus on the sum $S_{N,M,1}$. First we rewrite $\lambda_{N,k}$ in the form
\begin{gather}
\lambda_{N,k}=\mathcal{R}_{s}(2^{n_{k}})+2^{-n_{k}+1}\,(2^{s}\,\mathcal{R}_{s}(2^{n_{k}+1})-\mathcal{R}_{s}(2^{n_{k}}))\,\sum_{j=k+1}^{\tau(N)}2^{n_{j}}\label{def:lambdaNk:2}\\
=\mathcal{R}_{s}(2^{n_{k}})+2^{-n_{k}+1}\,(2^{s}\,\mathcal{R}_{s}(2^{n_{k}+1})-\mathcal{R}_{s}(2^{n_{k}}))\,\left(\sum_{j=k+1}^{M}2^{n_{j}}+\sum_{j=M+1}^{\tau(N)}2^{n_{j}}\right).\notag
\end{gather}
This shows that we can write
\begin{equation}\label{energydecomp:2}
S_{N,M,1}=D_{N,M,1}+D_{N,M,2},
\end{equation}
where
\begin{align}
D_{N,M,1} & :=\sum_{k=1}^{M}\left(\frac{2^{n_{k}}}{N}\right)^{1+s}\left[\mathcal{R}_{s}(2^{n_{k}})+2^{-n_{k}+1}\,(2^{s}\,\mathcal{R}_{s}(2^{n_{k}+1})-\mathcal{R}_{s}(2^{n_{k}}))\,\sum_{j=k+1}^{M}2^{n_{j}}\right],\label{def:DNM1}\\
D_{N,M,2} & :=\left(\sum_{j=M+1}^{\tau(N)} 2^{n_{j}}\right) \sum_{k=1}^{M}\left(\frac{2^{n_{k}}}{N}\right)^{1+s}
2^{-n_{k}+1}\,(2^{s}\,\mathcal{R}_{s}(2^{n_{k}+1})-\mathcal{R}_{s}(2^{n_{k}})). \label{def:DNM2}
\end{align}
Let's first estimate the sum \eqref{def:DNM2}. We have
\begin{gather*}
D_{N,M,2}=\left(\sum_{j=M+1}^{\tau(N)}\frac{2^{n_{j}}}{N}\right)\sum_{k=1}^{M}\left(\frac{2^{n_{k}}}{N}\right)^{s} (2^{s+1}\,\mathcal{R}_{s}(2^{n_{k}+1})-2\,\mathcal{R}_{s}(2^{n_{k}})).
\end{gather*}
Using \eqref{bound2} and the boundedness of the sequence $(\mathcal{R}_{s}(N))_{N}$, we find that there exists an absolute constant $C_{2}>0$ such that
\[
\left|\sum_{k=1}^{M}\left(\frac{2^{n_{k}}}{N}\right)^{s} (2^{s+1}\,\mathcal{R}_{s}(2^{n_{k}+1})-2\,\mathcal{R}_{s}(2^{n_{k}}))\right|<C_{2},\qquad \mbox{for all}\,\,N\in\mathcal{N}.
\]
This estimate and \eqref{bound4} shows that
\begin{equation}\label{bound6}
|D_{N,M,2}|<C_{2}\epsilon,\qquad \mbox{for all}\,\,N\in\mathcal{N}.
\end{equation}
Finally we analyze the sum \eqref{def:DNM1}. Introducing the notation
\[
\widetilde{\lambda}_{N,k}:=\mathcal{R}_{s}(2^{n_{k}})+2^{-n_{k}+1}\,(2^{s}\,\mathcal{R}_{s}(2^{n_{k}+1})-\mathcal{R}_{s}(2^{n_{k}}))\,\sum_{j=k+1}^{M}2^{n_{j}},
\]
we can write
\begin{equation}\label{energydecomp:3}
D_{N,M,1}=\sum_{k=1}^{M}\left(\frac{2^{n_{k}}}{N}\right)^{1+s} \widetilde{\lambda}_{N,k}=E_{N,M,1}+E_{N,M,2},
\end{equation}
where
\begin{align*}
E_{N,M,1} & :=\sum_{k=1}^{M}\left(\frac{2^{n_{k}}}{2^{n_{1}}+2^{n_{2}}+\cdots+2^{n_{M}}}\right)^{1+s}\widetilde{\lambda}_{N,k},\\
E_{N,M,2} & :=\left(\left(\frac{2^{n_{1}}+2^{n_{2}}+\cdots+2^{n_{M}}}{N}\right)^{1+s}-1\right) \sum_{k=1}^{M}\left(\frac{2^{n_{k}}}{2^{n_{1}}+2^{n_{2}}+\cdots+2^{n_{M}}}\right)^{1+s} \widetilde{\lambda}_{N,k}.
\end{align*}
Again the numbers $\widetilde{\lambda}_{N,k}$ are uniformly bounded and we have
\begin{gather*}
\sum_{k=1}^{M}\left(\frac{2^{n_{k}}}{2^{n_{1}}+2^{n_{2}}+\cdots+2^{n_{M}}}\right)^{1+s}\leq \sum_{k=1}^{M}\frac{2^{n_{k}}}{2^{n_{1}}+2^{n_{2}}+\cdots+2^{n_{M}}}=1,\\
\left|\left(\frac{2^{n_{1}}+2^{n_{2}}+\cdots+2^{n_{M}}}{N}\right)^{1+s}-1\right|\leq 1-(1-\epsilon)^{1+s},
\end{gather*}
where in the latter inequality we used \eqref{bound4}. We conclude that
\begin{equation}\label{bound7}
|E_{N,M,2}|\leq C_{3}\, (1-(1-\epsilon)^{1+s}),
\end{equation}
for some constant $C_{3}>0$.

Note that the expression $E_{N,M,1}$ is exactly as in \eqref{eq:rewritingsecorderenergy} but with $N$ replaced by $2^{n_{1}}+\cdots+2^{n_{M}}$ and $\tau(N)$ replaced by $M$. Therefore, as before we can find a subsequence $\widetilde{\mathcal{N}}$ of $\mathcal{N}$ such that
\begin{equation}\label{cond:subseqNtilde}
\lim_{N\in\widetilde{\mathcal{N}}}\frac{2^{n_{i}}}{2^{n_{1}}+\cdots+2^{n_{M}}}=\theta_{i},\qquad \mbox{for all}\,\,i=1,\ldots,M,
\end{equation}
and consequently
\begin{equation}\label{finallimit}
\lim_{N\in\widetilde{\mathcal{N}}} E_{N,M,1}=H((\theta_{1},\ldots,\theta_{M});s)\,\frac{2\zeta(s)}{(2\pi)^{s}}.
\end{equation}
Applying now the relations \eqref{energydecomp:1}, \eqref{energydecomp:2}, \eqref{energydecomp:3} and the bounds \eqref{bound5}, \eqref{bound6}, \eqref{bound7} and \eqref{finallimit}, we conclude that
\begin{gather*}
\lim_{N\in\mathcal{N}}\frac{E_{s}(\alpha_{N})-I_{s}(\sigma) N^{2}}{N^{1+s}}
=\lim_{N\in\widetilde{\mathcal{N}}}\frac{E_{s}(\alpha_{N})-I_{s}(\sigma) N^{2}}{N^{1+s}}=\lim_{N\in\widetilde{\mathcal{N}}}\left(E_{N,M,1}+E_{N,M,2}+D_{N,M,2}+S_{N,M,2}\right)\\
\leq H((\theta_{1},\ldots,\theta_{M});s)\,\frac{2\zeta(s)}{(2\pi)^{s}}
+C_{3}(1-(1-\epsilon)^{1+s})+C_{2}\epsilon+C_{1}\epsilon\\
\leq \underline{h}(s)\,\frac{2\zeta(s)}{(2\pi)^{s}}+C_{3}(1-(1-\epsilon)^{1+s})+C_{2}\epsilon+C_{1}\epsilon.
\end{gather*}
This inequality holds for an arbitrary $\epsilon>0$, so we obtain \eqref{eq:ineqtobeproved}. This finishes the proof of \eqref{eq:secorderasympRieszenergy:1}.

The asymptotic formula \eqref{secondorderasympRieszenergy:2} follows from the inequality $E_{s}(\alpha_{N})\geq \mathcal{L}_{s}(N)$, which is valid for every $N$ and is an equality for all $N$ of the form $N=2^{n}$, and the asymptotic formula \eqref{eq:secorderasympRieszFekete}.\hfill$\Box$

\subsection{First-order asymptotics in the Riesz case for $s>1$}

\noindent\emph{Proof of Theorem~$\ref{theo:firstorderasympRieszenergy}$.} The proofs of \eqref{eq:firstorderasympRieszenergy:1} and \eqref{eq:firstorderasympRieszenergy:2} are identical to the proofs of the corresponding formulas in Theorem~\ref{theo:secorderasympRieszenergy:1}. The reader only needs to use, instead of  \eqref{eq:secorderRieszenergy}, the formula
\begin{align*}
\frac{E_{s}(\alpha_{N})}{N^{1+s}}= & 
\sum_{k=1}^{t-1}\left(\frac{2^{n_{k}+1}}{N}\right)^{1+s}\left(\sum_{j=k+1}^{t}2^{n_{j}-n_{k}}\right)\frac{\mathcal{L}_{s}(2^{n_{k}+1})}{(2^{n_{k}+1})^{1+s}}\\ 
+ & \sum_{k=1}^{t}\left(\frac{2^{n_{k}}}{N}\right)^{1+s}\left(1-\sum_{j=k+1}^{t}2^{n_{j}-n_{k}+1}\right)\frac{\mathcal{L}_{s}(2^{n_{k}})}{(2^{n_{k}})^{1+s}},
\end{align*}
which follows from \eqref{Rieszenergybinary}, and use \eqref{eq:asympfirstorderRieszsupercrit} instead of \eqref{eq:secorderasympRieszFekete}.\hfill$\Box$

\bigskip

Figure~\ref{fig:energysonehalf} below displays the first $4200$ points of the sequence $\left(E_{s}(\alpha_{N})/N^{1+s}\right)_{N}$ in the case $s=2$.

\begin{figure}[h]
\centering
\includegraphics[totalheight=3in,keepaspectratio]{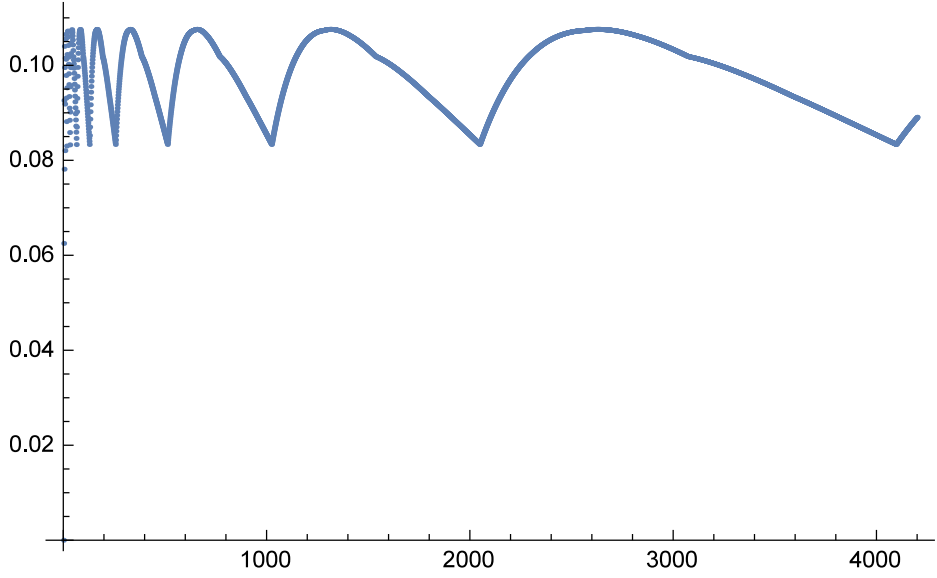}
\caption{The first $4200$ points of the sequence $\left(E_{s}(\alpha_{N})/N^{1+s}\right)_{N}$ in the case $s=2$.}
\label{fig:energysonehalf}
\end{figure}

\subsection{Second-order asymptotics in the Riesz case for $s=1$}

\noindent\emph{Proof of Theorem~$\ref{theo:criticalcase}$.} Below we will use the notation
\begin{equation}\label{def:R1}
\mathcal{R}_{1}(N):=\frac{\mathcal{L}_{1}(N)-\frac{1}{\pi}N^{2}\log N}{N^2}.
\end{equation}
If $N=2^{n_{1}}+2^{n_{2}}+\cdots+2^{n_{p}}$ in decreasing order of powers, applying \eqref{expansionN2} we can write conveniently
\begin{align*}
N^{2} \log N & =\sum_{k=1}^{p-1}\left(\sum_{j=k+1}^{p}2^{n_{j}-n_{k}}\right) \left(2^{2(n_{k}+1)} \log\left(2^{n_{k}+1}\right)+2^{2(n_{k}+1)} \log\left(\frac{N}{2^{n_{k}+1}}\right)\right)\\
 & +\sum_{k=1}^{p}\left(1-\sum_{j=k+1}^{p} 2^{n_{j}-n_{k}+1}\right)\left(2^{2n_{k}}\log \left(2^{2n_{k}}\right)+2^{2n_{k}} \log\left(\frac{N}{2^{2n_{k}}}\right)\right).
\end{align*}
Hence, applying \eqref{Rieszenergybinary} for $s=1$ and \eqref{def:R1} we obtain
\begin{align}\label{eq:EnergyNcritcase}
\frac{E_{1}(\alpha_{N})-\frac{1}{\pi}N^2\log N}{N^{2}} & =\frac{1}{N^2}\sum_{k=1}^{p-1}\left(\sum_{j=k+1}^{p}2^{n_{j}+n_{k}+2}\right)
\left(\mathcal{R}_{1}\left(2^{n_{k}+1}\right)+\frac{1}{\pi}\log\left(\frac{2^{n_{k}+1}}{N}\right)\right)\notag\\
& +\frac{1}{N^2}\sum_{k=1}^{p}\left(2^{2n_{k}}-\sum_{j=k+1}^{p}2^{n_{j}+n_{k}+1}\right)\left(\mathcal{R}_{1}(2^{n_{k}})+\frac{1}{\pi}\log\left(\frac{2^{n_{k}}}{N}\right)\right).
\end{align}

The proof of \eqref{secondorderasympcritcase:1} follows the same guidelines of the proof of \eqref{eq:secorderasympRieszenergy:1}. To prove the inequality ``$\geq$" in \eqref{secondorderasympcritcase:1}, we take an arbitrary $\vec{\theta}=(\theta_{1},\ldots,\theta_{p})\in\Theta_{p}$, and we let $\mathcal{N}$ be a sequence of integers $N=2^{n_{1}}+\cdots+2^{n_{p}}$ as in Definition~\ref{def:Thetap} satisfying \eqref{eq:conddefthetavec}. If we call $L=\frac{1}{\pi}(\gamma-\log (\pi/2))$ and apply \eqref{eq:conddefthetavec} and \eqref{eq:secorderFeketecrit}, it follows from \eqref{eq:EnergyNcritcase} that
\begin{gather}
\lim_{N\in\mathcal{N}}\frac{E_{1}(\alpha_{N})-\frac{1}{\pi}N^{2} \log N}{N^2}\label{limitfinitetau}\\
=\sum_{k=1}^{p-1}\left(\sum_{j=k+1}^{p}4 \theta_{k} \theta_{j}\right)(L+\frac{1}{\pi}\log(2\theta_{k}))+\sum_{k=1}^{p}(\theta_{k}^{2}-\sum_{j=k+1}^{p}2 \theta_{k} \theta_{j})(L+\frac{1}{\pi}\log \theta_{k})\notag\\
=L\left(\sum_{k=1}^{p}\theta_{k}^{2}+2\sum_{k=1}^{p-1}\sum_{j=k+1}^{p}\theta_{k} \theta_{j}\right)
+\frac{4}{\pi}\sum_{k=1}^{p-1}\left(\sum_{j=k+1}^{p}\theta_{j}\right)\theta_{k} \log(2\theta_{k})
+\frac{1}{\pi}\sum_{k=1}^{p}(\theta_{k}^2-\sum_{j=k+1}^{p}2\theta_{k}\theta_{j})\log \theta_{k}\notag\\
=L+\frac{1}{\pi}\,K(\theta_{1},\ldots,\theta_{p})\leq L+\frac{\kappa}{\pi},\notag
\end{gather}
where we used the fact that 
\[
\sum_{k=1}^{p}\theta_{k}^{2}+2\sum_{k=1}^{p-1}\sum_{j=k+1}^{p}\theta_{k} \theta_{j}=(\theta_{1}+\cdots+\theta_{p})^2=1.
\]
This proves the desired inequality.

The proof of the converse inequality in \eqref{secondorderasympcritcase:1} is similar to the one given for \eqref{eq:ineqtobeproved}, so we will make reference to that proof below. Let $\mathcal{N}\subset\mathbb{N}$ be an infinite sequence for which $\left(\frac{E_{1}(\alpha_{N})-\frac{1}{\pi}N^2 \log N}{N^2}\right)_{N\in\mathcal{N}}$ converges and we shall show that
\begin{equation}\label{eq:ineqtobeproved2}
\lim_{N\in\mathcal{N}}\frac{E_{1}(\alpha_{N})-\frac{1}{\pi}N^2 \log N}{N^2}\leq L+\frac{\kappa}{\pi}.
\end{equation}
As in the proof of \eqref{eq:ineqtobeproved}, if there exists $p\geq 1$ such that an infinite number of $N\in\mathcal{N}$ satisfy $\tau(N)=p$, then it is clear that \eqref{eq:ineqtobeproved2} holds.

So we assume now that $\tau(N)\rightarrow\infty$ as $N\rightarrow\infty$ in the sequence $\mathcal{N}$. We have, for $N=\sum_{k=1}^{\tau(N)} 2^{n_{k}}$, 
\begin{align}
\frac{E_{1}(\alpha_{N})-\frac{1}{\pi}N^2 \log N}{N^2}= & \sum_{k=1}^{\tau(N)}\left(\frac{2^{n_{k}}}{N}\right)^{2}\left(\mathcal{R}_{1}(2^{n_{k}})+2^{-n_{k}+1}\,(2\,\mathcal{R}_{1}(2^{n_{k}+1})-\mathcal{R}_{1}(2^{n_{k}})) \sum_{j=k+1}^{\tau(N)}2^{n_{j}}\right)\notag\\
+ & \sum_{k=1}^{\tau(N)}\left(\frac{2^{n_{k}}}{N}\right)^{2}\left(r(2^{n_{k}})+2^{-n_{k}+1}\,(2\,r(2^{n_{k}+1})-r(2^{n_{k}})) \sum_{j=k+1}^{\tau(N)}2^{n_{j}}\right),\label{eq:decompE1norm}
\end{align}
where we use the notation
\[
r(2^{n_{k}})=\frac{1}{\pi} \log(2^{n_{k}}/N),\qquad r(2^{n_{k}+1})=\frac{1}{\pi} \log(2^{n_{k}+1}/N).
\]
Let $\epsilon>0$ be arbitrary, and choose $M\in\mathbb{N}$ sufficiently large so that \eqref{bound4} holds. Let $\lambda_{N,k}$ denote the expression in \eqref{def:lambdaNk:2} with $s=1$, and let
\[
\rho_{N,k}:=r(2^{n_{k}})+2^{-n_{k}+1}\,(2\,r(2^{n_{k}+1})-r(2^{n_{k}})) \sum_{j=k+1}^{\tau(N)}2^{n_{j}}.
\]
We see from \eqref{eq:decompE1norm} that we can write
\begin{equation}\label{eq:decompE1norm:2}
\frac{E_{1}(\alpha_{N})-\frac{1}{\pi}N^2 \log N}{N^2}=S_{N,M,1}+S_{N,M,2}+S_{N,M,3}+S_{N,M,4},
\end{equation}
where $S_{N,M,1}$ and $S_{N,M,2}$ are defined in \eqref{def:SNM1SNM2} (taking $s=1$), and 
\[
S_{N,M,3}:=\sum_{k=1}^{M}\left(\frac{2^{n_{k}}}{N}\right)^2 \rho_{N,k},\qquad S_{N,M,4}:=\sum_{k=M+1}^{\tau(N)}\left(\frac{2^{n_{k}}}{N}\right)^2 \rho_{N,k}.
\]
As in \eqref{bound1} we have
\[
|\lambda_{N,k}|\leq C_{1},\qquad \mbox{for all}\,\,N\in\mathcal{N}\,\,\mbox{and}\,\,k=1,\ldots,\tau(N),
\]
for some constant $C_{1}>0$. Therefore as in \eqref{bound5} we have
\[
|S_{N,M,2}|< C_{1} \epsilon,\qquad \mbox{for all}\,\,N\in\mathcal{N}.
\]
We again write
\[
S_{N,M,1}=D_{N,M,1}+D_{N,M,2}
\]
with $D_{N,M,1}$ and $D_{N,M,2}$ given by \eqref{def:DNM1} and \eqref{def:DNM2}, respectively, taking $s=1$ in these formulas. We also have the estimate \eqref{bound6}. If we use \eqref{energydecomp:3}, the bound \eqref{bound7} and the previous estimates, we conclude that
\begin{gather}
S_{N,M,1}+S_{N,M,2}\label{estimateSNM1SNM2}\\
=\sum_{k=1}^{M}\left(\frac{2^{n_{k}}}{2^{n_{1}}+\cdots+2^{n_{M}}}\right)^2\left(\mathcal{R}_{1}(2^{n_{k}})+2^{-n_{k}+1}\,(2\,\mathcal{R}_{1}(2^{n_{k}+1})-\mathcal{R}_{1}(2^{n_{k}})) \sum_{j=k+1}^{M}2^{n_{j}}\right)+O(\epsilon).\notag
\end{gather}

The analysis for the sum $S_{N,M,3}+S_{N,M,4}$ follows the same argument, so we will not reproduce it below. Now we need to take into account the following estimates, which are easy to check: There exists an absolute constant $C>0$, independent of $N$ and $M$, such that
\begin{align*}
\left|\frac{2^{n_{k}}}{N}\,\rho_{N,k}\right| & <C,\qquad \mbox{for all}\,\,N\in\mathcal{N}\,\,\mbox{and}\,\,k=1,\ldots,\tau(N), \\
\sum_{k=1}^{M}\frac{2^{n_{k}}}{N}\,\left|\log\left(\frac{2^{n_{k}}}{N}\right)\right| & <C,\qquad \mbox{for all}\,\,N\in\mathcal{N}\,\,\mbox{and}\,\,M< \tau(N). 
\end{align*}
Using these estimates we find similarly that
\begin{gather}
S_{N,M,3}+S_{N,M,4}\label{estimateSNM3SNM4}\\
=\sum_{k=1}^{M}\left(\frac{2^{n_{k}}}{2^{n_{1}}+\cdots+2^{n_{M}}}\right)^2\left(\widetilde{r}(2^{n_{k}})+2^{-n_{k}+1}\,(2\,\widetilde{r}(2^{n_{k}+1})-\widetilde{r}(2^{n_{k}})) \sum_{j=k+1}^{M}2^{n_{j}}\right)+O(\epsilon),\notag
\end{gather}
where we use the notation
\[
\widetilde{r}(2^{n_{k}})=\frac{1}{\pi}\log\left(\frac{2^{n_{k}}}{2^{n_{1}}+\cdots+2^{n_{M}}}\right),\qquad \widetilde{r}(2^{n_{k}+1})=\frac{1}{\pi}\log\left(\frac{2^{n_{k}+1}}{2^{n_{1}}+\cdots+2^{n_{M}}}\right).
\]

Finally, we let $\widetilde{\mathcal{N}}$ be a subsequence of $\mathcal{N}$ such that the limits \eqref{cond:subseqNtilde} hold. Then, as in \eqref{limitfinitetau} we see that along the subsequence $\widetilde{\mathcal{N}}$, the first expression on the right-hand side of \eqref{estimateSNM1SNM2} converges to $L$, and the first expression on the right-hand side of \eqref{estimateSNM3SNM4} converges to $(1/\pi)\,K(\theta_{1},\ldots,\theta_{M})$. Therefore, applying \eqref{eq:decompE1norm:2}, \eqref{estimateSNM1SNM2} and \eqref{estimateSNM3SNM4}, we conclude that
\begin{gather*}
\lim_{N\in\mathcal{N}}\frac{E_{1}(\alpha_{N})-\frac{1}{\pi}N^{2}\log N}{N^{2}}
=\lim_{N\in\widetilde{\mathcal{N}}}\frac{E_{1}(\alpha_{N})-\frac{1}{\pi}N^{2}\log N}{N^{2}}\\=\lim_{N\in\widetilde{\mathcal{N}}}\left(S_{N,M,1}+S_{N,M,2}+S_{N,M,3}+S_{N,M,4}\right)
\leq L+\frac{1}{\pi} K(\theta_{1},\ldots,\theta_{M})+O(\epsilon)\leq L+\frac{\kappa}{\pi}+O(\epsilon).
\end{gather*}
This proves \eqref{eq:ineqtobeproved2} since $\epsilon$ is arbitrary.

The formula \eqref{secondorderasympcritcase:2} follows immediately from \eqref{eq:secorderFeketecrit} and the inequality $E_{1}(\alpha_{N})\geq \mathcal{L}_{1}(N)$, which is an equality for all $N$ of the form $N=2^{n}$. \hfill$\Box$

\bigskip

The following figure shows the first $4200$ values of the sequence $\left(\frac{E_{1}(\alpha_{N})-\frac{1}{\pi}N^{2}\log N}{N^2}\right)_{N}$.

\begin{figure}[h!]
\centering
\includegraphics[totalheight=3in,keepaspectratio]{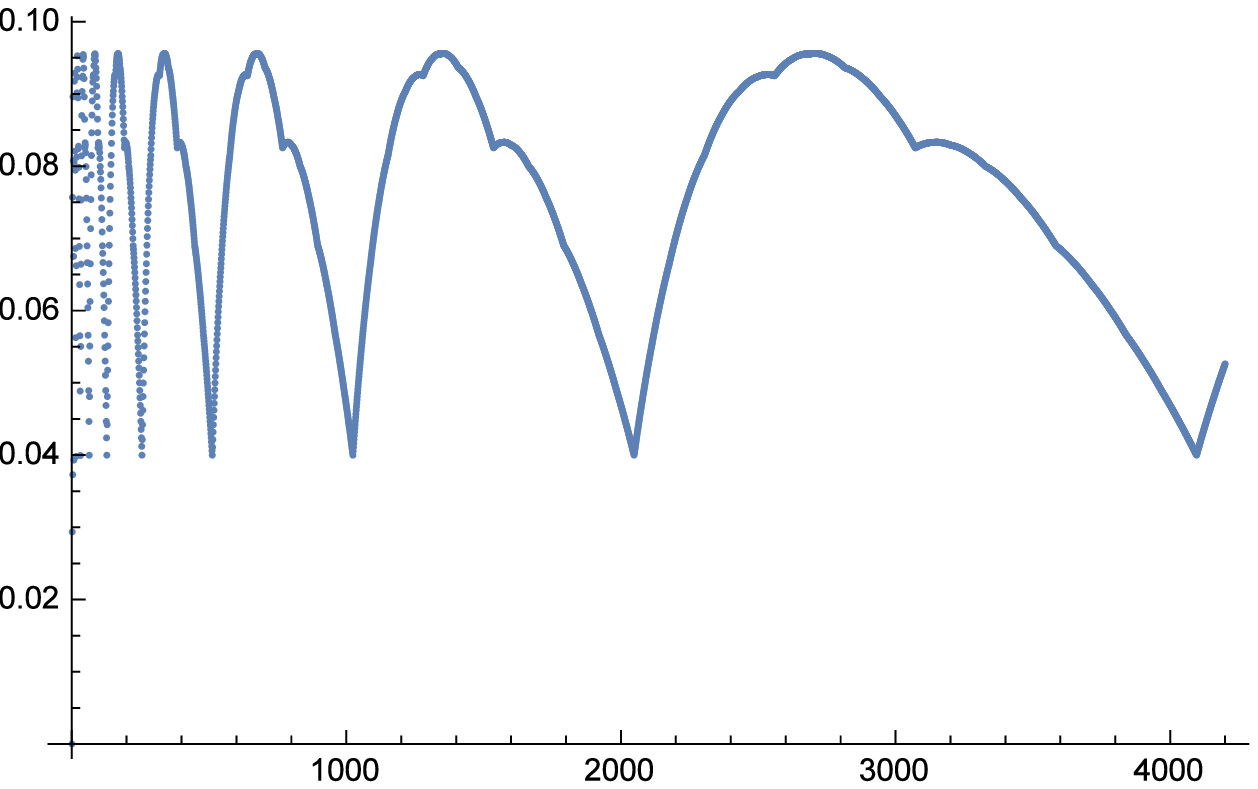}
\label{fig:sequences1first4200}
\end{figure}

\bigskip

\noindent\textbf{Acknowledgments}

\smallskip

\noindent We thank the referee for the many valuable comments provided.

\end{document}